\documentclass[11pt,twoside,a4paper,article]{memoir}

\usepackage[latin1]{inputenc}
\usepackage[T1]{fontenc}

\usepackage[english]{babel}

\usepackage{graphicx}

\ifpdf
\DeclareGraphicsRule{*}{mps}{*}{}
\fi

\usepackage{microtype}

\usepackage{amsmath}
\usepackage{amssymb}
\usepackage{amsfonts}

\usepackage[amsmath,thmmarks,hyperref]{ntheorem}

\usepackage{mathtools}
\usepackage{empheq}

\usepackage{varioref}

\usepackage{xspace}

\usepackage[justification=RaggedRight]{subfig}

\usepackage{lmodern}
\usepackage[sc]{mathpazo}

\usepackage{url}


\makeatletter
\@ifpackageloaded{mathpazo}{}{%
  \usepackage{fix-cm}
  \usepackage{bbm}}
\makeatother

\ifpdf
\usepackage[pdftitle={},
pdfauthor={Jørgen Ellegaard Andersen, Rasmus Villemoes},
pdfkeywords={mapping class group, abelian moduli space, group cohomology},
colorlinks=true, 
pdftex, backref=none]{hyperref}
\else
\usepackage[pdftitle={The first cohomology of the mapping class group
  with coefficients in algebraic functions on the SL(2, C) moduli
  space},
pdfauthor={Jørgen Ellegaard Andersen, Rasmus Villemoes},
pdfkeywords={mapping class group, multicurve, group cohomology},
colorlinks=true, 
ps2pdf,
bookmarksnumbered=true,
backref=none]{hyperref}
\fi
\usepackage{memhfixc}

\makeatletter
\@ifpackageloaded{mathpazo}{
  \newcommand{\mathsetfont}{\mathbb}
}{
  \newcommand{\mathsetfont}{\mathbbm}}
\makeatother

\newcommand{\DeclareMathSet}[1]{%
  \expandafter\newcommand\csname set#1\endcsname{\mathsetfont{#1}}}

\DeclareMathSet{C} 
\DeclareMathSet{R} 
\DeclareMathSet{Q} 
\DeclareMathSet{Z} 
\DeclareMathSet{N} 

\DeclareMathOperator{\Hom}{Hom}
\DeclareMathOperator{\Aut}{Aut}
\DeclareMathOperator{\Out}{Out}



\newcommand{\inv}{^{-1}}

\DeclarePairedDelimiter{\abs}{\lvert}{\rvert}
\DeclarePairedDelimiter{\inner}{\langle}{\rangle}


\renewcommand{\phi}{\varphi}
\renewcommand{\epsilon}{\varepsilon}
\renewcommand{\rho}{\varrho}

\newcommand{\U}{\mathrm{U}}
\newcommand{\Uone}{\ensuremath{\U(1)}}
\newcommand{\SL}{\mathrm{SL}}
\newcommand{\Sp}{\mathrm{Sp}}
\newcommand{\SU}{\mathrm{SU}}

\newcommand{\ltwo}{\ell^2}
\newcommand{\propT}{Property~(T)\xspace}
\newcommand{\propFH}{Property~(FH)\xspace}

\theoremstyle{plain}
\theoremsymbol{}
\theorembodyfont{\itshape}
\theoremheaderfont{\normalfont\bfseries}
\theoremseparator{.}
\newtheorem{lemma}{Lemma}[section]
\newtheorem{theorem}[lemma]{Theorem}
\newtheorem{proposition}[lemma]{Proposition}
\newtheorem{corollary}[lemma]{Corollary}
\theoremstyle{nonumberplain}
\newtheorem{maintheorem}{Theorem}

\theoremstyle{nonumberplain}
\theorembodyfont{\normalfont}
\theoremheaderfont{\itshape}
\theoremsymbol{\ensuremath{\square}}
\newtheorem{proof}{Proof}

\qedsymbol{\ensuremath{\square}}

\setsecnumdepth{subsection}

\makepagestyle{rv}
\makeoddhead {rv}{}{\textsc{Mapping class groups and the abelian
    moduli space}}{\thepage}
\makeevenhead{rv}{\thepage}{\textsc{J.E. Andersen, R. Villemoes}}{}
\pagestyle{rv}

\author{Jørgen Ellegaard Andersen \and Rasmus Villemoes}

\title{Cohomology of mapping class groups and the abelian moduli
  space}

\strictpagechecktrue

\begin{document}

\maketitle

\begin{abstract}
  \noindent
  We consider a surface $\Sigma$ of genus $g \geq 3$, either closed or
  with exactly one puncture. The mapping class group $\Gamma$ of
  $\Sigma$ acts symplectically on the abelian moduli space $M =
  \Hom(\pi_1(\Sigma), \Uone) = \Hom(H_1(\Sigma), \Uone)$, and hence
  both $L^2(M)$ and $C^\infty(M)$ are modules over $\Gamma$. In this
  paper, we prove that both the cohomology groups $H^1(\Gamma,
  L^2(M))$ and $H^1(\Gamma, C^\infty(M))$ vanish.
\end{abstract}

\section{Introduction}
\label{sec:introduction}

There are very natural ways to generate infinite dimensional unitary
representations of the mapping class group via representation
varieties of compact Lie groups. Let us here briefly recall the
construction. Let $\Sigma$ be a compact surface of genus $g$, which is
either closed or with one boundary component. Let $G$ be a compact
Lie group and consider the moduli space $M$ of flat
$G$-connections on $\Sigma$, ie.
\begin{equation*}
  M = \Hom(\pi_1(\Sigma), G)/G.
\end{equation*}
If we choose a set of generators for the fundamental group, we get an
induced identification
\begin{equation}
  \label{eq:5}
  M \cong G^{2g}/G
\end{equation}
if $\Sigma$ has a boundary component and
\begin{equation}
  \label{eq:2}
  M \cong \bigl\{(A_i,B_i)\in G^{2g}\mid \prod_{i=1}^g[A_i,B_i] = 1\bigr\}/G  
\end{equation}
if $\Sigma$ is closed.

We use this presentation of this space to define the space of smooth
functions $C^\infty(M)$ on $M$. It is easy to see that this is
independent of the choice of generators. The mapping class group
$\Gamma$ clearly acts on $M$, and this way $C^\infty(M)$ becomes a
module over $\Gamma$.  In the case where $\Sigma$ has one boundary
component, we also observe that both $\Aut(F_{2g})$ and $\Out(F_{2g})$
acts on $M$, where $F_{2g}$ denotes the free group on $2g$ generators.

The biinvariant Haar measure on $G$ induces a measure on $M$
via~\eqref{eq:5} in case $\Sigma$ has one boundary component. In case
$G$ is closed, Goldman~\cite{MR762512} has constructed a symplectic
form $\omega$ on $M$, which induces the Liouville measure
$\omega^n/n!$. In both cases, the mapping class group action preserves
the measure on $M$, so $L^2(M)$, the space of complex-valued, square
integrable functions on $M$, becomes an infinite-dimensional unitary
representation of $\Gamma$.

By work of Goldman~\cite{MR2346275,MR1491446},
Gelander~\cite{MR2448015}, the action of $\Aut(F_n)$ on $\Hom(F_n, G)$
and the action of $\Out(F_n)$ on $\Hom(F_n,G)/G$ are both ergodic for
$n\geq 3$. Furthermore, Pickrell and Xia~\cite{MR1915045,MR2015257},
based on Goldman's results, showed that the action of $\Gamma$ on $M$
is ergodic when $\Sigma$ is closed. When $\Sigma$ has boundary, the
mapping class group preserves the subsets of $M$ defined by requiring
a representation $\rho\colon \pi_1(\Sigma)\to G$ to map each boundary
component into a prescribed conjugacy class in $G$; the action of
$\Gamma$ on each such subset is ergodic.

Ergodicity in particular means that the only invariant functions are
the constants. Hence, letting $L^2_0$ denote the subspace of $L^2$
corresponding to functions with mean value $0$, the above results may
be interpreted as the vanishing of certain $0$'th cohomology groups,
such as $H^0(\Aut(F_n), L^2_0(G^n))$ and $H^0(\Gamma, L^2_0(M))$.

It is very natural to ask if $H^1(\Gamma, L^2(M))$ vanishes both in
case where $\Sigma$ is closed and in the case where $\Sigma$ has one
boundary component. In the latter case, we can also ask if
$H^1(\Aut(F_{2g}), L^2(M))$ and $H^1(\Out(F_{2g}), L^2(M))$ vanishes.
As it is well known, answering any of these questions in the negative
implies that the corresponding group does not have Kazhdan's
property~(T) \cite{MR2415834}.  In case $\Sigma$ is closed, Andersen
has in~\cite{math.QA/0706.2184} established that the mapping class
group does not have Kazhdan's property~(T) by using the TQFT quantum
representations of $\Gamma$. We, however, do not expect that any of
these cohomology groups are non-vanishing and so will not shed light
on this question.

In this paper we answer the first question affirmatively in the
abelian case, where $G = \Uone$ (see Theorem~\ref{thm:1}).
\begin{maintheorem}
  For $G = \Uone$ we have that $H^1(\Gamma, L^2(M)) = 0$.
\end{maintheorem}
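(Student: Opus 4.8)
The plan is to Fourier-decompose $L^2(M)$, reduce the problem to the cohomology of $\Sp(2g,\setZ)$ with unitary coefficients (where \propT is available) together with a Torelli-group computation, and then to reassemble the pieces. Concretely: since $\Uone$ is abelian, $M=\Hom(H_1(\Sigma;\setZ),\Uone)$ is a $2g$-dimensional torus, the Pontryagin dual of $H_1(\Sigma;\setZ)\cong\setZ^{2g}$, with its Haar measure (the Liouville measure of Goldman's form in the closed case). Fourier analysis presents $L^2(M)$, as a unitary $\Gamma$-module, as the Hilbert space direct sum of the one-dimensional subspaces $\setC e_v$ spanned by the characters $e_v$, $v\in H_1(\Sigma;\setZ)$, on which $\Gamma$ permutes the index set through its symplectic action on $H_1(\Sigma;\setZ)$, i.e.\ through the canonical surjection $\Gamma\to\Sp(2g,\setZ)$ with kernel the Torelli group $\mathcal I$. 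The $\Sp(2g,\setZ)$-orbits on $\setZ^{2g}$ are the sets $\mathcal O_d$ of vectors of divisibility $d$, $d=0,1,2,\dots$, with $\mathcal O_0=\{0\}$; grouping the $e_v$ accordingly writes $L^2(M)$ as the Hilbert space direct sum of the $\Gamma$-submodules $\setC=\ltwo(\mathcal O_0)$ and $\ltwo(\mathcal O_d)$, $d\geq1$. I would first show that $H^1(\Gamma,-)$ vanishes on each summand, and then handle the completed direct sum.

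For the constants, $H^1(\Gamma,\setC)=\Hom(\Gamma,\setC)=0$ since $\Gamma^{\mathrm{ab}}$ is finite (trivial for $g\geq3$). Fix $d\geq1$. The $\Gamma$-action on $\ltwo(\mathcal O_d)$ factors through $\Sp(2g,\setZ)$, so $\mathcal I$ acts trivially, and the five-term exact sequence of $1\to\mathcal I\to\Gamma\to\Sp(2g,\setZ)\to1$ reads
\[
0\to H^1\bigl(\Sp(2g,\setZ),\ltwo(\mathcal O_d)\bigr)\to H^1\bigl(\Gamma,\ltwo(\mathcal O_d)\bigr)\to\Hom\bigl(\mathcal I,\ltwo(\mathcal O_d)\bigr)^{\Sp(2g,\setZ)}.
\]
The left term vanishes because $\Sp(2g,\setZ)$ has \propT for $g\geq2$ and $\ltwo(\mathcal O_d)$ is unitary (this is \propFH). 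For the right term I would invoke Johnson's theorem that $\mathcal I$ is finitely generated for $g\geq3$, which makes $H_1(\mathcal I;\setQ)$ a finite-dimensional $\Sp(2g,\setZ)$-module; hence any $\Sp(2g,\setZ)$-equivariant homomorphism $H_1(\mathcal I;\setQ)\to\ltwo(\mathcal O_d)$ has finite-dimensional image, a finite-dimensional unitary $\Sp(2g,\setZ)$-subrepresentation of $\ltwo(\mathcal O_d)$. One checks $\ltwo(\mathcal O_d)$ has no nonzero such subrepresentation: a finite-dimensional unitary representation of $\Sp(2g,\setZ)$ has image in a compact group, hence amenable image, and being a quotient of a \propT group it is then finite, so it factors through a finite quotient $\Sp(2g,\setZ)/K$; but the finite-index subgroup $K$ acts on $\mathcal O_d$ with only infinite orbits (the $\Sp(2g,\setZ)$-stabiliser of any vector of $\mathcal O_d$ already has infinite index), so $\ltwo(\mathcal O_d)^{K}=0$. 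Thus the right term vanishes and $H^1(\Gamma,\ltwo(\mathcal O_d))=0$.

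The remaining step, and the one I expect to be the main obstacle, is passing from the individual summands to the completed direct sum: a priori the coboundaries correcting a cocycle $c\colon\Gamma\to L^2(M)$ on the separate summands need not be square-summable over $d$. Here \propT for $\Sp(2g,\setZ)$ must be used quantitatively. Fix generators $\gamma_1,\dots,\gamma_k$ of $\Gamma$; their images generate $\Sp(2g,\setZ)$, with some Kazhdan constant $\kappa>0$. Decompose $c=\sum_{d\geq0}c_d$ by orthogonal projection onto the summands. Then $c_0$ is a homomorphism $\Gamma\to\setC$, hence zero, while for $d\geq1$ the restriction of $c_d$ to $\mathcal I$ is an $\Sp(2g,\setZ)$-equivariant homomorphism into $\ltwo(\mathcal O_d)$ and so vanishes by the previous paragraph; thus $c_d$ descends to a cocycle on $\Sp(2g,\setZ)$. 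The fixed-point estimate for affine isometric actions of a Kazhdan group then produces $b_d\in\ltwo(\mathcal O_d)$ with $c_d(\gamma)=b_d-\gamma\cdot b_d$ for all $\gamma$, and $\lVert b_d\rVert^{2}\leq\kappa^{-2}\sum_{j=1}^{k}\lVert c_d(\gamma_j)\rVert^{2}$. Summing over $d$ and using Parseval, $\sum_d\lVert b_d\rVert^{2}\leq\kappa^{-2}\sum_{j=1}^{k}\lVert c(\gamma_j)\rVert^{2}<\infty$, so $b=\sum_d b_d\in L^2(M)$ and $c(\gamma)=b-\gamma\cdot b$. Hence $H^1(\Gamma,L^2(M))=0$.

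In summary, the only delicate point is controlling cohomology of the completed, rather than the algebraic, direct sum, which forces the quantitative use of \propT and the \emph{uniform} (not merely individual) triviality of the Torelli contribution; it is exactly here, together with Johnson's finiteness result, that the hypothesis $g\geq3$ is essential. The other ingredients --- surjectivity of $\Gamma\to\Sp(2g,\setZ)$ and \propT for $\Sp(2g,\setZ)$, both valid already for $g\geq2$ --- are standard.
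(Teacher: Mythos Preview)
Your overall architecture---Fourier analysis to get a permutation module, Hochschild--Serre for $1\to\mathcal T\to\Gamma\to\Sp(2g,\setZ)\to1$, and \propT for $\Sp(2g,\setZ)$---is exactly the paper's.  Two points deserve comment.

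\medskip
\noindent\textbf{The orbit decomposition is a detour.}  Property~(T) for $\Sp(2g,\setZ)$ already kills $H^1$ with coefficients in \emph{any} unitary representation (Lemma~\ref{lem:15}), not just in each $\ltwo(\mathcal O_d)$ separately.  The paper therefore applies Hochschild--Serre directly to the single module $\ltwo(H')\cong L^2_0(M)$: once one knows that every cocycle $\Gamma\to\ltwo(H')$ restricts trivially to $\mathcal T$, inflation gives $H^1(\Gamma,\ltwo(H'))\cong H^1(\Sp(2g,\setZ),\ltwo(H'))=0$, and no reassembly or quantitative Kazhdan constant is needed.  Your quantitative argument is correct, but it is solving a problem you created by decomposing.

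\medskip
\noindent\textbf{The Torelli step is different, and your justification has a gap.}  The paper handles $\Hom(\mathcal T,\ltwo(H'))^\Gamma$ geometrically: it proves (Theorem~\ref{thm:4}, via the chain and lantern relations) that $p_\gamma u(\tau_\gamma)=0$ for any unitary representation, and combines this with Johnson's \emph{bounding-pair} generation of $\mathcal T$ to show that any cocycle restricts to zero on $\mathcal T$ (Lemma~\ref{lem:3}).  Your route instead uses Johnson's \emph{finite} generation to force the image of an equivariant $\mathcal T\to\ltwo(\mathcal O_d)$ to be finite-dimensional, then argues that $\ltwo(\mathcal O_d)$ has no nonzero finite-dimensional $\Sp$-subrepresentation.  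This conclusion is correct, but your reasoning ``image in a compact group, hence amenable image'' is not: a subgroup of a compact group need not be amenable as a discrete group (e.g.\ $F_2\hookrightarrow\SO(3)$), so you cannot conclude finiteness from \propT that way.  You can repair this either by invoking Margulis superrigidity (finite-dimensional unitary representations of $\Sp(2g,\setZ)$, $g\geq 2$, have finite image), or---more elementarily and with no \propT at all---by observing that for any finite-dimensional $\Sp$-invariant $W\subset\ltwo(\mathcal O_d)$ with orthonormal basis $w_1,\dots,w_n$, the function $v\mapsto\sum_i|w_i(v)|^2$ is $\Sp$-invariant (it equals $\lVert P_W e_v\rVert^2$), hence constant by transitivity, hence zero since it is summable over the infinite set $\mathcal O_d$.
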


The proof of this theorem uses the fact that for $g\geq 2$, the group
$\Sp(2g,\setZ)$ is known to have property~(T), the Hochschild-Serre
exact sequence, along with the following result (Theorem~\ref{thm:4})
which holds for all unitary representations:
\begin{maintheorem}
  Let $\Gamma\to \U(V)$ be a unitary represention of the mapping class
  group on a real or complex Hilbert space $V$. For a Dehn twist
  $\tau_\gamma$, let $V_\gamma$ denote the subspace of $V$ fixed under
  $\tau_\gamma$, and let $p_\gamma\colon V\to V_\gamma$ denote the
  orthogonal projection. Then $p_\gamma u(\tau_\gamma) = 0$ for any
  cocycle $u\colon \Gamma\to V$ and any simple closed curve $\gamma$.
\end{maintheorem}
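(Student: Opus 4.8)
The plan is to reduce the statement, by a centralizer argument, to the vanishing of the single vector $\xi_\gamma := p_\gamma u(\tau_\gamma)$, and then to kill $\xi_\gamma$ using the lantern relation. First I would record two consequences of the cocycle identity $u(gh) = u(g) + g\,u(h)$. If $\gamma$ and $\delta$ are disjoint simple closed curves then $\tau_\gamma$ and $\tau_\delta$ commute, and substituting $\tau_\gamma\tau_\delta = \tau_\delta\tau_\gamma$ into the cocycle identity gives $(\tau_\delta - \mathrm{id})u(\tau_\gamma) = (\tau_\gamma - \mathrm{id})u(\tau_\delta)$. Since $\tau_\delta$ preserves $V_\gamma$ it commutes with $p_\gamma$, and since $\tau_\gamma$ is unitary $\mathrm{im}(\tau_\gamma - \mathrm{id})\subseteq V_\gamma^\perp$, so applying $p_\gamma$ kills the right-hand side and yields $(\tau_\delta - \mathrm{id})\,\xi_\gamma = 0$; thus $\tau_\delta\xi_\gamma = \xi_\gamma$ and $\xi_\gamma\in V_\delta$. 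Secondly, substituting $\phi\tau_\gamma\phi^{-1} = \tau_{\phi(\gamma)}$ into the cocycle identity and using that $\phi$ carries $V_\gamma$ onto $V_{\phi(\gamma)}$ (so $p_{\phi(\gamma)}\phi = \phi\,p_\gamma$) shows the assignment $\gamma\mapsto\xi_\gamma$ is $\Gamma$-equivariant, $\xi_{\phi(\gamma)} = \phi\,\xi_\gamma$. In particular $\|\xi_\gamma\|$, and more generally each inner product $\langle\xi_\gamma,\xi_\mu\rangle$, depends only on the $\Gamma$-orbit of the ordered pair of curves involved.

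The heart of the argument is the lantern relation $\tau_{a_1}\tau_{a_2}\tau_{a_3}\tau_{a_4} = \tau_{b_1}\tau_{b_2}\tau_{b_3}$ carried by an embedded four-holed sphere in $\Sigma$, in which the $a_i$ are pairwise disjoint, each $a_i$ is disjoint from each $b_j$, and the only intersecting pairs are $b_j,b_k$ with $j\ne k$. Because $g\geq 3$ one can choose such an embedding so that all seven curves, and all their pairwise unions, are non-separating. Set $W := \bigcap_i V_{a_i}\cap\bigcap_j V_{b_j}$, with orthogonal projection $P_W$. By the first step $\xi_{a_i}\in W$ (it is fixed by every $\tau_{a_k}$ and every $\tau_{b_j}$) while $\xi_{b_j}\in\bigcap_i V_{a_i}$; moreover each of the seven Dehn twists acts as the identity on $W$, hence commutes with $P_W$, and each $\eta_c := u(\tau_c) - \xi_c\in V_c^\perp$ is orthogonal to $W$. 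Expanding $u$ of both sides of the lantern relation by the cocycle rule and applying $P_W$, every $\eta$-term and every nontrivial twist in front of a $\xi$-term disappears, leaving the clean identity
\[
  \sum_{i=1}^{4}\xi_{a_i} \;=\; P_W\Bigl(\,\sum_{j=1}^{3}\xi_{b_j}\Bigr).
\]

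To finish in the non-separating case, pair this identity with $\sum_j\xi_{b_j}$: since the left side lies in $W$ this gives $\bigl\|\sum_i\xi_{a_i}\bigr\|^2 = \sum_{i,j}\langle\xi_{a_i},\xi_{b_j}\rangle$. Every pair $(a_i,b_j)$, and every pair $(a_i,a_k)$ with $i\ne k$, is an ordered pair of disjoint non-separating curves with non-separating union, hence by the change-of-coordinates principle they all lie in a single $\Gamma$-orbit; by equivariance all the inner products above equal one common real number $\kappa$, so the right side is $12\kappa$, whereas expanding the norm on the left directly gives $4\|\xi_\gamma\|^2 + 12\kappa$. Hence $\xi_\gamma = 0$ for every non-separating $\gamma$. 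For a separating $\gamma$ one embeds, again using $g\geq 3$, a lantern with $a_4 = \gamma$ and the other six curves non-separating; the same identity, now with $\xi_{a_1}=\xi_{a_2}=\xi_{a_3}=\xi_{b_1}=\xi_{b_2}=\xi_{b_3}=0$ by the previous case, collapses to $\xi_\gamma = P_W(0) = 0$.

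I expect the main obstacle to be the second step, in two senses. Conceptually, one has to hit on projecting the cocycle identity onto the common fixed space $W$ — this is exactly what forces the lantern relation to collapse into a linear relation among the $\xi$'s instead of a tangle also involving the $\eta$'s. Technically, one must arrange the lantern embedding so that all seven curves and the required pairwise unions are non-separating, and verify the associated orbit count; this is where the hypothesis $g\geq 3$ is genuinely used and is the fiddly part, whereas the centralizer computation and the inner-product bookkeeping are routine once $W$ is in play.
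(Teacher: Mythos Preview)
Your argument is correct, and in fact slightly more economical than the paper's. The preliminary observations---naturality $\xi_{\phi(\gamma)}=\phi\,\xi_\gamma$ and invariance of $\xi_\gamma$ under the centralizer of $\tau_\gamma$---are the same as in the paper. The difference lies in how the vanishing for non-separating curves is obtained. The paper proceeds in two steps: first it uses the \emph{chain relation} $(\tau_\alpha\tau_\beta\tau_\gamma)^4=\tau_\delta\tau_\epsilon$ in a two-holed torus, together with an auxiliary disjoint curve $\eta$, to prove that the cross-term $\kappa=\langle\xi_\alpha,\xi_\beta\rangle$ vanishes (the relation yields $12\kappa=2\kappa$); only then does it invoke the lantern relation, pairing the expanded cocycle identity against $\xi_{\gamma_0}$, to conclude $\|\xi_{\gamma_0}\|^2=0$. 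You bypass the chain relation entirely: projecting onto the common fixed space $W$ turns the lantern into a clean linear identity, and pairing against $\sum_j\xi_{b_j}$ makes the twelve $\kappa$'s on each side cancel directly, yielding $4\|\xi\|^2+12\kappa=12\kappa$ in one stroke. Your treatment of the separating case is essentially the same as the paper's (the paper phrases it via Corollary~\ref{cl:4}, writing $\tau_\gamma$ as a product of non-separating twists, but this is the same lantern embedding you describe).

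One small remark: your phrase ``all their pairwise unions are non-separating'' is slightly imprecise since the $b_j$ intersect one another; what your argument actually uses is that each pair $(a_i,a_k)$ and each pair $(a_i,b_j)$ is a disjoint pair with non-separating union, and this follows immediately once the four-holed sphere is embedded with connected complement, which is straightforward for $g\ge 3$.
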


We are also able to prove the analogous result of Theorem~\ref{thm:1}
when we replace $L^2$-functions by smooth functions
(Theorem~\ref{thm:2}).
\begin{maintheorem}
  For $G = \Uone$ we have that $H^1(\Gamma, C^\infty(M)) = 0$.
\end{maintheorem}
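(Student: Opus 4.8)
The plan is to deduce Theorem~\ref{thm:2} from Theorem~\ref{thm:1} by an elliptic-regularity-type bootstrap. Let $u\colon\Gamma\to C^\infty(M)$ be a cocycle. Composing with the $\Gamma$-equivariant inclusion $C^\infty(M)\hookrightarrow L^2(M)$ and applying Theorem~\ref{thm:1}, one gets $v\in L^2(M)$ with $u(\gamma)=\gamma\cdot v-v$ for all $\gamma$, and it suffices to show that this $v$ is smooth. Since $G=\Uone$ we have $M=\Uone^{2g}$, with character lattice $\Lambda\cong\setZ^{2g}$, and the $\Gamma$-action on $M$ factors through the (surjective) symplectic representation $\Gamma\to\Sp(2g,\setZ)$, which acts linearly on $\Lambda$. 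Writing $\hat w\colon\Lambda\to\setC$ for the Fourier coefficients of $w\in L^2(M)$, smoothness of $w$ is equivalent to $\hat w$ being rapidly decreasing, and each $\widehat{u(\gamma)}$ is rapidly decreasing by hypothesis. The orbits of $\Sp(2g,\setZ)$ on $\Lambda$ are $\{0\}$ and, for each $d\geq1$, the set $\mathcal O_d$ of vectors whose coordinates have greatest common divisor $d$; the action preserves each orbit, so one may argue on each $\mathcal O_d$ separately. The orbit $\{0\}$ contributes a constant, hence a smooth function, so fix $d\geq1$ and note $\hat v|_{\mathcal O_d}\in\ell^2(\mathcal O_d)$.

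The heart of the matter is a one-dimensional telescoping argument along ``transvection rays'', and this is where the $\ell^2$-decay of $v$ is used. Fix non-separating simple closed curves $\delta_1,\dots,\delta_{2g}$ whose homology classes $c_1,\dots,c_{2g}$ form a symplectic basis; then $\tau_{\delta_i}$ acts on $\Lambda$ by a transvection along $c_i$, and because $\langle c_i,c_i\rangle=0$ its powers move $\mu$ along the arithmetic progression $\{\mu+k\langle\mu,c_i\rangle c_i:k\in\setZ\}$. Given $\mu\in\mathcal O_d$, choose $i$ with $\langle\mu,c_i\rangle\neq0$ (possible since the $c_i$ span and the form is nondegenerate) and choose between $\tau_{\delta_i}$ and $\tau_{\delta_i}^{-1}$ so that the resulting ray $\mu_0=\mu,\mu_1,\mu_2,\dots$ moves away from the origin; since $\langle\mu,c_i\rangle$ is a nonzero integer this yields $|\mu_k|\geq\sqrt{|\mu|^2+k^2}$. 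Telescoping the identity $\widehat{u(\tau_{\delta_i}^{\pm1})}(\mu_k)=\hat v(\mu_{k+1})-\hat v(\mu_k)$ and using that $\hat v|_{\mathcal O_d}\in\ell^2$ forces $\hat v(\mu_k)\to0$ along this infinite subset of $\mathcal O_d$, one obtains the absolutely convergent expression $\hat v(\mu)=-\sum_{k\geq0}\widehat{u(\tau_{\delta_i}^{\pm1})}(\mu_k)$. Since $\widehat{u(\tau_{\delta_i}^{\pm1})}$ is rapidly decreasing and $|\mu_k|\gtrsim|\mu|+k$, this gives $|\hat v(\mu)|\leq C_N(1+|\mu|)^{-(N-1)}$ for every $N$, where $C_N$ is controlled by the finitely many seminorms $\sup_x(1+|x|)^N|\widehat{u(\tau_{\delta_i}^{\pm1})}(x)|$ and is in particular independent of $\mu$ and of $d$.

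Assembling these estimates, $\hat v$ is rapidly decreasing on all of $\Lambda$, so $v\in C^\infty(M)$, and then $u(\gamma)=\gamma\cdot v-v$ exhibits $u$ as a coboundary already in $C^\infty(M)$; since $u$ was arbitrary, $H^1(\Gamma,C^\infty(M))=0$. The hypothesis $g\geq3$ enters only through Theorem~\ref{thm:1}: the regularity bootstrap itself uses merely that $\Gamma$ surjects onto $\Sp(2g,\setZ)$ and contains the twists $\tau_{\delta_i}$. The step I expect to be the main obstacle is exactly this bootstrap, and more precisely the need to obtain \emph{rapid} decay of $\hat v$ rather than decay at some fixed polynomial rate. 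Inverting the ``group Laplacian'' $\sum_i(2-\tau_{\delta_i}-\tau_{\delta_i}^{-1})$ using property~(T) of $\Sp(2g,\setZ)$ does produce polynomial decay of $\hat v$, but it is the genuinely one-dimensional telescoping along the transvection rays, fed by the $\ell^2$-decay at infinity, that pushes this all the way to rapid decay. A secondary point requiring care is uniformity across the infinitely many orbits $\mathcal O_d$, which is why the final estimate is arranged so that $C_N$ depends only on the fixed generators $\tau_{\delta_i}^{\pm1}$.
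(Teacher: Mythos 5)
Your proposal is correct and follows essentially the same route as the paper: reduce to Theorem~\ref{thm:1} to obtain an $L^2$ potential, then prove its Fourier coefficients are rapidly decreasing by telescoping along the orbit of a homology class under a suitably chosen Dehn twist (the paper's Lemma~\ref{lem:12}), using the $\ell^2$-decay of the potential to sum the telescope and the rapid decay of the cocycle values $u(\tau_j^{\pm1})$ to get the estimate. The only differences are cosmetic (Euclidean versus $\ell^1$-type norm on $H_1(\Sigma)$, and the decomposition into $\Sp(2g,\setZ)$-orbits $\mathcal{O}_d$, which is harmless but unnecessary since the argument is carried out pointwise in $\mu$ anyway).
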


These two results should be compared to the main results
from~\cite{0710.2203} and~\cite{0802.4372}. In the latter, we
considered the case $G = \SL_2(\setC)$ and the space $\mathcal{O} =
\mathcal{O}(\mathcal{M}_{\SL_2(\setC)})$ of regular functions on the
moduli space (this makes sense, since~\eqref{eq:5} and~\eqref{eq:2}
give the moduli space the structure of an algebraic variety). The
conclusion in that case was that $H^1(\Gamma, \mathcal{O}) = 0$. In
the former paper, on which the latter is based, we considered the
algebraic dual module, $\mathcal{O}^* = \Hom(\mathcal{O},\setC)$, and
the conclusion in that case was that $H^1(\Gamma, \mathcal{O}^*)$ can
be written as a countable direct product of finite-dimensional
components, of which at least one is non-zero.

This paper is organized as follows. In the next section, we briefly
describe our motivation for studying this problem, apart from its
connection to Property~(T). In section~\ref{sec:twists-relations}, we
briefly recall certain well-known facts about mapping class groups:
relations between Dehn twists, the action of a twist on a homology
element, and generation of the Torelli group by bounding pair maps.
The main purpose of section~\ref{sec:unit-repr} is to prove that for
$g\geq 3$, a certain necessary condition for the vanishing of the
cohomology group $H^1(\Gamma, V)$ is always satisfied, for any unitary
representation $V$ of $\Gamma$ (this is the above-mentioned
Theorem~\ref{thm:4}). We also quote the results about $\Sp(2g,\setZ)$
and property~(T) which we need. Section~\ref{sec:funct-abel-moduli} is
devoted to describing an orthonormal basis for the space of
$L^2$-functions on the abelian moduli space. This basis has two nice
properties: The mapping class group acts by permuting basis elements,
and there is a simple condition for determining if an $L^2$-function
is smooth in terms of its coefficients in this basis. Finally, in
section~\ref{sec:cohom-comp}, we prove the two main theorems quoted
above.

\section{Motivation}
\label{sec:motivation}

The motivation for studying the first cohomology group of the mapping
class group with coefficients in a space of functions on the moduli
space came from \cite{0611126}. In that paper, the first author
studied deformation quantizations, or star products, of the Poisson
algebra of smooth functions on the moduli space $M_G$ for $G=\SU(n)$.
The construction uses Toeplitz operator techniques and produces a
family of star products parametrized by Teichmüller space. In
\cite{0611126} the problem of turning this family into one mapping
class group invariant star product was reduced to a question about the
first cohomology group of the mapping class group with various twisted
coefficients.  Specifically, one of the results in \cite{0611126}
(Proposition~6) is that, provided the cohomology group $H^1(\Gamma,
C^\infty(M_G))$ vanishes, one may find a $\Gamma$-invariant
equivalence between any two equivalent star products.  Since it is
easy to see that the only $\Gamma$-invariant equivalences are the
multiples of the identity, this immediately implies that within each
equivalence class of star products, there is at most one
$\Gamma$-invariant star product.

Considering the results of \cite{MR2135450}, \cite{0611126} and the
present paper, we get the following application.
\begin{theorem}
  \label{thm:5}
  For $G = \Uone$, there is a unique mapping class group invariant
  star product on $M_G$.
\end{theorem}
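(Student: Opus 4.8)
The plan is to obtain Theorem~\ref{thm:5} as a formal consequence of Theorem~\ref{thm:2}, combined with the construction of \cite{0611126} and the treatment of the abelian moduli space in \cite{MR2135450}. Recall from \cite{0611126} that the Toeplitz operator construction produces, for each complex structure $\sigma$ on $\Sigma$ (equivalently, each point of Teichm\"uller space), a differential star product $\star_\sigma$ on $C^\infty(M)[[\hbar]]$, and that this family is equivariant: the natural action of $\phi\in\Gamma$ on functions carries $\star_\sigma$ to $\star_{\phi\cdot\sigma}$. For $G=\Uone$ the moduli space $M$ is, at each $\sigma$, the Jacobian of the corresponding Riemann surface --- a complex torus, hence with trivial canonical bundle --- and by \cite{MR2135450} the star products $\star_\sigma$ consequently all lie in a single equivalence class; in fact one can arrange the construction so that $\star_\sigma$ does not depend on $\sigma$ at all.

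\emph{Existence.} Fix a basepoint $\sigma_0$ and set $\star=\star_{\sigma_0}$. For each $\phi\in\Gamma$ the star product $\phi\cdot\star$ coincides with $\star_{\phi\cdot\sigma_0}$, which is equivalent to $\star$; by \cite[Proposition~6]{0611126} the obstruction to modifying $\star$, within its equivalence class, into a genuinely $\Gamma$-invariant star product lies in $H^1(\Gamma,C^\infty(M))$, and this group vanishes by Theorem~\ref{thm:2}. Hence a $\Gamma$-invariant star product exists on $M$. (If, as in \cite{MR2135450}, $\star_\sigma$ is already independent of $\sigma$, then $\star$ itself is $\Gamma$-invariant and this step is not needed.)

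\emph{Uniqueness.} Suppose $\star$ and $\star'$ are two $\Gamma$-invariant star products in the same equivalence class. By \cite[Proposition~6]{0611126} together with Theorem~\ref{thm:2} there is a \emph{$\Gamma$-invariant} equivalence $T$ from $\star$ to $\star'$, and one checks that any such $T$ is a scalar multiple of the identity: writing $T=\mathrm{id}+\sum_{k\geq1}\hbar^k T_k$ and expanding in the orthonormal basis of $C^\infty(M)$ described in Section~\ref{sec:funct-abel-moduli}, on which $\Gamma$ acts by permuting basis elements, $\Gamma$-invariance forces the matrix entries of $T$ to be constant along $\Gamma$-orbits; since there are no non-trivial $\Gamma$-invariant vector fields or bidifferential operators of the relevant type on $M$ --- a consequence of the invariant theory of the $\Sp(2g,\setZ)$-action, equivalently of the ergodicity recalled in Section~\ref{sec:introduction} --- an induction on $k$ then gives $T_k=0$ for all $k\geq1$. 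Hence $\star=\star'$. Combined with Existence this proves the theorem, with the understanding that ``uniqueness'' refers to the distinguished equivalence class produced by the construction; this is the natural class to single out, since $\Gamma$ acts on $H^2(M;\setR)$ through $\Sp(2g,\setZ)$ with a one-dimensional space of invariants, spanned by the class of the symplectic form.

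The entire substance of the argument is carried by Theorem~\ref{thm:2}; granting it, Theorem~\ref{thm:5} is a short deduction, and the main obstacle therefore lies not here but in the proof of Theorem~\ref{thm:2} itself. The only auxiliary points requiring care are that the Toeplitz family for $G=\Uone$ is $\Gamma$-equivariant and consists of mutually equivalent star products (the role of \cite{MR2135450}) and the elementary fact that $\Gamma$-invariant equivalences are multiples of the identity; neither of these is expected to cause difficulty.
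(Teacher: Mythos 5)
Your argument follows the paper's own (very terse) proof essentially verbatim: existence comes from the construction in \cite{MR2135450}, and uniqueness from Proposition~6 of \cite{0611126} together with Theorem~\ref{thm:2} and the observation that the only $\Gamma$-invariant equivalences are multiples of the identity. The extra detail you supply (the equivariance of the Toeplitz family, the sketch that invariant equivalences are scalars, and the caveat that uniqueness is really uniqueness within the distinguished equivalence class) is consistent with what the paper states in its motivation section but leaves implicit.
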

Existence follows from \cite{MR2135450} and uniqueness from
\cite{0611126} and Theorem~\ref{thm:2} below. 

\section{Group cohomology}
\label{sec:group-cohomology}

In this section we will introduce some terminology and basic results
which will be used throughout the rest of the paper. Let $G$ be a
group. A $G$-module is a module over the integral group ring
$\setZ G$, or equivalently, an abelian group $M$ together with a
homomorphism $\pi\colon G\to\Aut(M)$.

A cocycle on $G$ with values in $M$ is a map $u\colon G\to
M$ satisfying the \emph{cocycle condition}
\begin{equation}
  \label{eq:12}
  u(gh) = u(g) + g u(h)
\end{equation}
for all $g,h\in G$. Here, and elsewhere, we suppress the homomorphism
defining the action from the notation; the last term in \eqref{eq:12}
should be read $\pi(g) u(h)$. The space of all cocycles is denoted $Z^1(G,M)$.
It is easy to see from \eqref{eq:12} that a cocycle is determined by
its values on a set of generators of $G$. If $1\in G$ denotes the
identity element, it is easy to see that $u(1) = 0$. From this it
follows that $u(g\inv) = -g\inv u(g)$ for any $g\in G$. It is also
easy to deduce the formula $u(ghg\inv) = (1-ghg\inv)u(g) + g u(h)$.
These observations will be used without further comment.

A cocycle is said to be a coboundary if it is of the
form $g\mapsto v - gv = (1-g)v$ for some $v\in M$. The space
of coboundaries is denoted $B^1(G, M)$, and the first cohomology
group of $G$ with coefficients in $M$ is the quotient
\begin{equation}
  \label{eq:13}
  H^1(G, M) = Z^1(G,M) / B^1(G, M).
\end{equation}
Notice that in the special case where the group acts trivially on $M$,
the cocycle condition simply means that $u$ is a homomorphism, and the
space of coboundaries vanish. Hence, in that case we have $H^1(G, M) =
\Hom(G, M)$. If $\pi\colon G\to \Aut(M)$ is the homomorphism defining
the action, we may also denote $H^1(G, M)$ by $H^1(G, \pi)$.

If $G$ is a topological group, $M$ is a topological vector space and
the action of $G$ on $M$ is continuous, one may equip $Z^1(G, M)$ with
the topology of uniform convergence over compact subsets. In this
topology, $B^1(G, M)$ may or may not be closed in $Z^1(G, M)$; in any
case, the quotient
\begin{equation}
  \label{eq:4}
  \overline{H^1}(G, M) = Z^1(G, M) / \overline{B^1(G, M)}
\end{equation}
is known as the reduced cohomology of $G$ with coefficients in $M$.

\begin{proposition}
  \label{prop:1}
  Assume $1\to K\to G\to Q\to 1$ is a short exact sequence of groups,
  and that $M$ is a $G$-module on which $K$ acts trivially (hence
  making $M$ a $Q$-module). Then there is an exact sequence
  \begin{equation}
    \label{eq:1}
    0\to H^1(Q, M) \to H^1(G, M) \to H^1(K, M)^G.
  \end{equation}
\end{proposition}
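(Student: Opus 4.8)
The plan is to construct the five-term exact sequence (inflation--restriction) by hand at the level of cocycles, since the coefficient module is fixed and we only need the first three terms. I would first recall the two natural maps. The inflation map $\mathrm{inf}\colon H^1(Q,M)\to H^1(G,M)$ sends a cocycle $\bar u\colon Q\to M$ to the cocycle $g\mapsto \bar u(\pi(g))$, where $\pi\colon G\to Q$ is the quotient map; this is well-defined since $M$ is a $Q$-module via $\pi$, and it carries coboundaries to coboundaries. The restriction map $\mathrm{res}\colon H^1(G,M)\to H^1(K,M)$ sends a cocycle $u\colon G\to M$ to its restriction $u|_K$; since $K$ acts trivially on $M$, a cocycle on $K$ is just a homomorphism, and the image of $\mathrm{res}$ lands in the $G$-invariants $H^1(K,M)^G = \Hom(K,M)^G$ (the $G$-action on $\Hom(K,M)$ being by conjugation on $K$ and the given action on $M$, which on the image of a cocycle reduces to the conjugation action on $K$ together with translation; one checks invariance directly from the identity $u(ghg\inv) = (1-ghg\inv)u(g) + gu(h)$ restricted to $h\in K$).

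Next I would verify exactness at each of the two interior spots. Exactness at $H^1(Q,M)$ (injectivity of inflation): if $\bar u$ inflates to a coboundary $g\mapsto (1-g)v$ on $G$, then for $k\in K$ we get $0 = \bar u(\pi(k)) = (1-k)v$, so $v$ is $K$-invariant; since $K$ acts trivially anyway this gives no constraint directly, but $v$ being annihilated by $1-k$ for all $k\in K$ means $v$ descends to a well-defined element of $M$ as a $Q$-module, and then $\bar u$ is the coboundary of $v$ in $Q$. Exactness at $H^1(G,M)$: the composite $\mathrm{res}\circ\mathrm{inf}$ is zero because an inflated cocycle vanishes on $K$ (as $\pi(k)=1$ and $\bar u(1)=0$). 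Conversely, suppose $u\colon G\to M$ is a cocycle with $u|_K$ a coboundary; since $K$ acts trivially, $u|_K$ is a homomorphism, and being a coboundary of some $v$ means $u(k) = (1-k)v = 0$, so in fact $u|_K \equiv 0$ after subtracting a coboundary of $G$ — more precisely, one shows $u|_K$ being trivial (not just a coboundary) is what we get, and then $u$ is constant on cosets of $K$: for $g\in G$ and $k\in K$, $u(gk) = u(g) + gu(k) = u(g)$, so $u$ factors through a map $\bar u\colon Q\to M$, which one checks is a cocycle, and $u = \mathrm{inf}(\bar u)$.

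The one genuinely delicate point is the gap between "$u|_K$ is a coboundary in $B^1(K,M)$" and "$u|_K$ is identically zero." Because $K$ acts trivially on $M$, we have $B^1(K,M) = 0$, so a cocycle on $K$ is a coboundary if and only if it is zero; hence this apparent subtlety evaporates. I should state this observation explicitly at the start, since it is exactly what makes the sequence terminate cleanly at $H^1(K,M)^G$ without a connecting map to $H^2(Q,M)$ being needed for our purposes. The rest is bookkeeping: checking the cocycle condition for the map $\bar u$ obtained by factoring through $Q$, and checking that all constructions respect the coboundary relations so that they descend to cohomology. I would assemble these verifications in the order (i) well-definedness of $\mathrm{inf}$ and $\mathrm{res}$ and the invariance of the image of $\mathrm{res}$, (ii) $\mathrm{res}\circ\mathrm{inf}=0$, (iii) exactness at $H^1(G,M)$, (iv) injectivity of $\mathrm{inf}$. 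I expect step (iii) — recovering the $Q$-cocycle from a $G$-cocycle trivial on $K$ — to be the main place where one must be careful, though it is ultimately routine given the vanishing of $B^1(K,M)$.
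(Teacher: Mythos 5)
Your proposal is correct and follows essentially the same route as the paper: an explicit cocycle-level construction of inflation and restriction, verification that the image of restriction lands in the $G$-invariant homomorphisms via the identity $u(gkg\inv)=(1-gkg\inv)u(g)+gu(k)$, and recovery of a $Q$-cocycle from a $G$-cocycle vanishing on $K$. Your explicit remark that $B^1(K,M)=0$ (so that a restricted cocycle which is a coboundary is actually zero) is a point the paper uses only implicitly, and is a worthwhile clarification; the only slightly muddled passage is the injectivity of inflation, where the clean statement is simply that $\bar u(q)=(1-\widetilde q)v=(1-q)v$ because the $G$-action on $M$ factors through $Q$.
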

Here $H^1(K, M)^G$ denotes the subset of $H^1(K, M) = \Hom(K, M)$
which is invariant under $G$. The action is given by $(g\cdot u)(k) =
g\inv u(gkg\inv)$, so an invariant homomorphism is one that satisfies
the equivariance condition
\begin{align}
  \label{eq:30}
  u(gkg\inv) = gu(k)
\end{align}
for all $g\in G$ and $k\in K$.

This exact sequence comes from an abstract beast known as the
Hochschild-Serre spectral sequence, and really continues with two
$H^2$ terms. Also, in the more general case one does not need to
require that $K$ acts trivially on $M$; instead, the cohomology of $Q$
is taken with coefficients in the submodule $M^K$ invariant under $K$.
However, we only need the part of the exact sequence shown in
\eqref{eq:1}, and we are able to give an explicit hands-on proof of
this proposition which does not involve a spectral sequence.
\begin{proof}
  The first map above is given by precomposing a cocycle $u\colon Q\to
  M$ with the projection map $\pi\colon G\to Q$. This clearly maps
  cocycles to cocycles. If $u\in Z^1(Q, M)$ is the coboundary of some
  element $v\in M$, then the cocycle $u\circ \pi\in Z^1(G, M)$ is also
  the coboundary of $v$. Hence the first map above is
  well-defined. Furthermore, if $u\circ\pi$ is a coboundary, then
  $u(q) = u(\pi(\widetilde q)) = (1-\widetilde q) v = (1-q)v$, where
  $\widetilde q$ is any element of $G$ mapping to $q$ under
  $\pi$. This proves that the first map above is injective, and hence
  proves exactness at $H^1(Q, M)$.

  The second map above is given by restricting a cocycle $u\colon G\to
  M$ to $K$. It is easy to see that the restricted map is a
  homomorphism from $K$, and that restricting a coboundary gives the
  zero map, so that the map is well-defined. To see that the map
  actually takes values in the space of invariant homomorphisms
  follows from the little calculation
  \begin{align*}
    (g\cdot u)(k) &= g\inv u(gkg\inv)\\
    &= g\inv \bigl( (1-gkg\inv)u(g) +
    gu(k) \bigr)\\
    &= u(k) + (1-k) g\inv u(g)\\
    &= u(k)
  \end{align*}
  since $k$ acts trivially on $M$.

  Clearly, if $u$ is a cocycle $Q\to M$, the composition $K\to G\to
  Q\to M$ is zero, so the image of the first map is contained in the
  kernel of second. Conversely, assume that $u\colon G\to M$ is a
  cocycle which satisfies $u(k) = 0$ for any $k\in K$. For any $q\in
  Q$, choose some $g\in G$ mapping to $q$, and put $\widetilde u(q) =
  u(g)$. This is well-defined, as another choice $g'$ of lift would
  differ from $g$ by an element $k\in K$, and then $u(g') = u(gk) =
  u(g) + gu(k) = u(g)$. If $q_1, q_2\in Q$, choose lifts $g_1, g_2\in
  G$. Then the product $g_1g_2$ is a lift of $q_1q_2$, and we have
  \begin{equation*}
    \widetilde u(q_1q_2) = u(g_1g_2) = u(g_1) + g_1u(g_2) = \widetilde
    u(q_1) + q_1 \widetilde u(q_2),
  \end{equation*}
  so $\widetilde u$ is a cocycle on $Q$. This proves exactness at
  $H^1(G, M)$.
\end{proof}

\section{Twists and relations}
\label{sec:twists-relations}

\begin{lemma}
  \label{lem:5}
  Dehn twists on disjoint curves commute.
\end{lemma}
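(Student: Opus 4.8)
The plan is to exploit the fact that a Dehn twist is supported in an annular neighbourhood of its defining curve, so that twists along curves which can be separated are realised by homeomorphisms with disjoint support. First I would recall the definition being used: for a simple closed curve $\gamma$ one fixes a regular (annular) neighbourhood $A_\gamma \cong S^1\times[-1,1]$ and represents $\tau_\gamma$ by a homeomorphism that restricts to the standard twist $(\theta,t)\mapsto(\theta+\pi(t+1),t)$ on $A_\gamma$ and to the identity on $\Sigma\setminus A_\gamma$; the resulting element of $\Gamma$ is independent of the choice of neighbourhood and of the parametrisation within its isotopy class.

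Next, given two disjoint simple closed curves $\gamma$ and $\delta$ — or, if ``disjoint'' is understood up to isotopy, disjoint representatives of the two isotopy classes — I would invoke the standard fact that two disjoint compact $1$-submanifolds of a surface admit disjoint regular neighbourhoods (tubular neighbourhood theorem together with compactness). So one may choose $A_\gamma\cap A_\delta=\emptyset$, and let $f$ and $g$ be the representing homeomorphisms of $\tau_\gamma$ and $\tau_\delta$, supported in $A_\gamma$ and $A_\delta$ respectively. Then I would verify that $f$ and $g$ commute pointwise: on $A_\gamma$ the map $g$ is the identity, so $f\circ g=f=g\circ f$ there; symmetrically on $A_\delta$; and on the complement of $A_\gamma\cup A_\delta$ both maps are the identity. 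Hence $fg=gf$ as orientation-preserving homeomorphisms of $\Sigma$, and therefore $\tau_\gamma\tau_\delta=\tau_\delta\tau_\gamma$ in $\Gamma$.

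There is no substantial obstacle here; the one point requiring a moment's care is precisely the step where the supports are separated, i.e.\ the existence of disjoint annular neighbourhoods of disjoint (representatives of the) curves. Everything else is the general principle that homeomorphisms with disjoint supports commute, applied to the standard local model of a Dehn twist.
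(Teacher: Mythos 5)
Your argument is correct and is the standard proof of this fact: realise each twist by a homeomorphism supported in an annular neighbourhood, separate the neighbourhoods, and observe that homeomorphisms with disjoint supports commute. The paper states this lemma without proof as a well-known fact, so there is nothing to compare against; your write-up supplies exactly the expected justification, including the one genuinely necessary point (existence of disjoint regular neighbourhoods for disjoint representatives).
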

\begin{lemma}
  \label{lem:6}
  If $\alpha$ and $\beta$ are simple closed curves intersecting
  transversely in a single point, the associated Dehn twists are
  \emph{braided}. That is, $\tau_\alpha\tau_\beta\tau_\alpha =
  \tau_\beta\tau_\alpha\tau_\beta$.
\end{lemma}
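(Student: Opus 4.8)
The plan is to deduce the braid relation from the conjugation formula $h\tau_\gamma h\inv = \tau_{h(\gamma)}$, valid for any orientation-preserving homeomorphism $h$ of $\Sigma$ and any simple closed curve $\gamma$ (this is immediate from the definition of a Dehn twist). The whole argument can be carried out inside a regular neighbourhood $N$ of $\alpha\cup\beta$, since both twists, and the isotopies below, are supported there; by the change-of-coordinates principle this neighbourhood is a torus with one boundary component, independently of where $\alpha$ and $\beta$ sit in $\Sigma$.

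First I would establish the key geometric input: $(\tau_\alpha\tau_\beta)(\alpha)$ is isotopic to $\beta$. Orient $\alpha$ and $\beta$ so that their algebraic intersection number is $1$, and write $a = [\alpha]$, $b = [\beta]$ for the corresponding classes in $H_1(N)\cong\setZ^2$. Using the standard formula for the action of a twist on homology, $\tau_\gamma(x) = x + \inner{x,\gamma}\gamma$ (up to an overall sign, which is harmless here), one computes $\tau_\beta(a) = a + b$ and then $\tau_\alpha(a+b) = (a+b) - a = b$; hence $(\tau_\alpha\tau_\beta)(\alpha)$ has homology class $\pm b$. On a torus with one boundary component a nonseparating simple closed curve is determined up to isotopy by its homology class up to sign, so $(\tau_\alpha\tau_\beta)(\alpha)$ is isotopic to $\beta$ as claimed. (Alternatively, this isotopy can be exhibited directly by drawing the three curves $\alpha$, $\tau_\beta(\alpha)$ and $(\tau_\alpha\tau_\beta)(\alpha)$ on the once-holed torus.)

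Now apply the conjugation formula with $h = \tau_\alpha\tau_\beta$: since $h(\alpha)$ is isotopic to $\beta$, and a Dehn twist depends only on the isotopy class of the underlying unoriented curve, we get
\begin{equation*}
  \tau_\beta = \tau_{h(\alpha)} = h\tau_\alpha h\inv = \tau_\alpha\tau_\beta\,\tau_\alpha\,\tau_\beta\inv\tau_\alpha\inv .
\end{equation*}
Multiplying on the right first by $\tau_\alpha$ and then by $\tau_\beta$ rearranges this into $\tau_\beta\tau_\alpha\tau_\beta = \tau_\alpha\tau_\beta\tau_\alpha$, which is the asserted braid relation.

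The main obstacle is the geometric input of the second paragraph — in particular, justifying that an unoriented nonseparating simple closed curve on the once-holed torus is pinned down by its homology class up to sign, so that the homological computation really identifies $(\tau_\alpha\tau_\beta)(\alpha)$ with $\beta$ on the nose. This is a classical fact about low-genus surfaces (it fails in higher genus), and in a fully self-contained account one would either quote it or, more robustly, replace the homology computation by an explicit sequence of pictures on $N$; everything else is formal manipulation with the conjugation identity.
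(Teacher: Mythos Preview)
The paper does not actually prove this lemma: it is listed in Section~\ref{sec:twists-relations} among a handful of standard facts about Dehn twists (commutation, conjugation, chain, lantern) that are simply recalled without proof. So there is no ``paper's own proof'' to compare against.

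That said, your argument is correct and is the standard one (as in Farb--Margalit's \emph{Primer}, for instance): reduce to the conjugation identity $h\tau_\gamma h^{-1} = \tau_{h(\gamma)}$ together with the geometric fact that $\tau_\alpha\tau_\beta(\alpha)$ is isotopic to $\beta$. Your homological computation is fine with the sign convention you adopt, and your honesty about the one nontrivial point---that on $\Sigma_{1,1}$ a nonseparating simple closed curve is determined up to isotopy by its unsigned homology class---is appropriate; in a self-contained write-up the direct picture argument you mention is the cleanest way to close that gap.
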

\begin{lemma}
  \label{lem:4}
  If $\alpha$ is a simple closed curve on $\Sigma$ and $f\in \Gamma$,
  we have $f\circ \tau_\alpha\circ f\inv = \tau_{f(\alpha)}$.
\end{lemma}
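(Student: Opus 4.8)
The plan is to argue directly from the definition of a Dehn twist as a homeomorphism supported in an annular neighbourhood of the curve. Recall that $\tau_\alpha$ is represented by a homeomorphism $T$ of $\Sigma$ which restricts to the identity outside a closed annular neighbourhood $A$ of $\alpha$, and which, in terms of an orientation-preserving identification $h\colon A\to S^1\times[0,1]$ with the model annulus, agrees with the standard twist $(\theta,t)\mapsto(\theta+2\pi t,t)$. I would fix such a representative $T$ together with a homeomorphism $F$ of $\Sigma$ representing $f$, and then show that $F\circ T\circ F\inv$ is a representative of $\tau_{f(\alpha)}$.

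First I would note that $F\circ T\circ F\inv$ is the identity on the complement of $F(A)$, since $T$ is the identity on the complement of $A$. The image $F(A)$ is a closed annular neighbourhood of $F(\alpha)$, a curve representing $f(\alpha)$, and $h\circ F\inv\colon F(A)\to S^1\times[0,1]$ is again an orientation-preserving identification with the model annulus --- here I use that every element of $\Gamma$ is orientation-preserving. In these coordinates $F\circ T\circ F\inv$ restricted to $F(A)$ is carried precisely to $h\circ T\circ h\inv$, that is, to the standard twist of the model annulus. Hence $F\circ T\circ F\inv$ is, by the very definition of a Dehn twist, a representative of the twist about $f(\alpha)$, and passing to mapping classes gives $f\circ\tau_\alpha\circ f\inv=\tau_{f(\alpha)}$ in $\Gamma$.

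Finally I would flag two points of hygiene. The handedness of the resulting twist comes out correctly precisely because $F$ preserves orientation, so that $h\circ F\inv$ is orientation-preserving and the standard twist is not inadvertently replaced by its inverse; this is the one place where orientation genuinely matters. Moreover, the statement is independent of all the choices because the Dehn twist about a curve depends only on the isotopy class of that curve: any two closed annular neighbourhoods of a given simple closed curve are related by an ambient isotopy, so $\tau_{f(\alpha)}$ is well-defined as an element of $\Gamma$ and the identity above descends unambiguously to mapping classes. I do not expect any genuine obstacle here beyond this orientation and well-definedness bookkeeping.
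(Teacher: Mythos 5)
The paper states this lemma without proof, as one of several standard facts about Dehn twists recalled in Section~3, so there is no argument of the paper's to compare against. Your proof is the standard one --- conjugating a representative supported in an annular neighbourhood $A$ and transporting the model-annulus coordinates by $h\circ F\inv$ --- and it is correct, including the two points you flag: orientation-preservation of $F$ is exactly what prevents the twist from being replaced by its inverse, and independence of the choice of annulus and representative is what lets the identity descend to mapping classes.
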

\begin{lemma}[Chain relation]
  \label{lem:7}
  Let $\alpha$, $\beta$ and $\gamma$ be simple closed curves in a
  two-holed torus as in Figure~\ref{fig:chain-relation}, and let
  $\delta$, $\epsilon$ denote curves parallel to the boundary
  components of the torus. Then $(\tau_\alpha\tau_\beta\tau_\gamma)^4
  = \tau_\delta\tau_\epsilon$.
\end{lemma}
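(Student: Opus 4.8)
The plan is to deduce the relation from a classical identity in the braid group $B_4$, via the Birman--Hilden double covering of the four-punctured disc. The first move is the change-of-coordinates principle: by Lemma~\ref{lem:4} together with the classification of configurations of curves, any chain $\alpha,\beta,\gamma$ of simple closed curves filling a two-holed torus $S$, together with its two boundary curves $\delta,\epsilon$, is carried by an element of $\Gamma$ onto any other such configuration, and conjugation by that element carries the corresponding products of twists along. So it suffices to prove the identity for one convenient model, and since all the twists involved are supported in the subsurface $S$, it suffices to prove it in $\Map(S)$.

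For the model I would take $D$ a disc with four marked points $p_1,\dots,p_4$ and $\pi\colon S\to D$ the double cover branched over them. An Euler-characteristic count gives $\chi(S)=2\chi(D)-4=-2$, and since the $\setZ/2\setZ$-monodromy around each $p_i$ is nontrivial, the monodromy around $\partial D$ is trivial, so $\pi\inv(\partial D)$ is a pair of circles; these are the two boundary components of $S$, and we take them to be $\delta$ and $\epsilon$. Thus $S$ is a two-holed torus. Choosing disjoint arcs $a_i$ from $p_i$ to $p_{i+1}$ for $i=1,2,3$, with $a_i$ and $a_{i+1}$ meeting only at their shared endpoint, the preimages $\alpha=\pi\inv(a_1)$, $\beta=\pi\inv(a_2)$, $\gamma=\pi\inv(a_3)$ are simple closed curves: consecutive ones meet in the single point lying over a shared branch point, $\alpha$ and $\gamma$ are disjoint, and together they fill $S$. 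This is the configuration of the lemma.

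Now I would invoke the Birman--Hilden theorem: every mapping class of $D$ fixing $\partial D$ and preserving $\{p_1,\dots,p_4\}$ lifts uniquely through $\pi$, giving a homomorphism $\Map(D,\{p_1,\dots,p_4\})\to\Map(S)$; and under the standard isomorphism $\Map(D,\{p_1,\dots,p_4\})\cong B_4$ the elementary half-twist $\sigma_i$, which is supported near $a_i$, lifts to the Dehn twist about $\pi\inv(a_i)$. Hence $\sigma_1,\sigma_2,\sigma_3$ map to $\tau_\alpha,\tau_\beta,\tau_\gamma$ — consistent with the fact that the braid relations among $\sigma_1,\sigma_2,\sigma_3$ are exactly the braiding of consecutive twists (Lemma~\ref{lem:6}) and the commuting of the outer two (Lemma~\ref{lem:5}). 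In $B_4$ one has the classical identity $(\sigma_1\sigma_2\sigma_3)^4=\tau_{\partial D}$, the full twist being the Dehn twist about the boundary of the disc; and $\tau_{\partial D}$ lifts to $\tau_\delta\tau_\epsilon$ because $\pi\inv(\partial D)=\delta\sqcup\epsilon$ (these twists commuting by Lemma~\ref{lem:5}). Pushing this identity through the lifting homomorphism yields $(\tau_\alpha\tau_\beta\tau_\gamma)^4=\tau_\delta\tau_\epsilon$, which is the lemma — up to a choice of handedness convention, under which all the $\sigma_i$ may be arranged to lift to positive twists simultaneously.

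The step I expect to cause trouble is the geometric bookkeeping of the branched cover rather than any group theory: one must check carefully that the cover of the four-punctured disc is a two-holed torus with $\pi\inv(\partial D)$ as its boundary, that the lifted curves $\alpha,\beta,\gamma$ are genuinely the chain of Figure~\ref{fig:chain-relation} (so that the change-of-coordinates reduction is legitimate), and — most delicately — that the full twist lifts to $\tau_\delta\tau_\epsilon$ and not to some other product $\tau_\delta^a\tau_\epsilon^b$ of boundary twists. The braid-theoretic inputs — the presentation of $B_4$, the identity $(\sigma_1\sigma_2\sigma_3)^4=\tau_{\partial D}$, and the Birman--Hilden lifting theorem — are standard and can be quoted; what requires real care is pinning down the correspondence between the two sides of the covering, in particular the behaviour over $\partial D$.
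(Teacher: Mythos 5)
The paper does not actually prove Lemma~\ref{lem:7}: it is recorded in Section~\ref{sec:twists-relations} as one of several standard facts about Dehn twists, quoted without proof, so there is no argument of the authors' to compare yours against. Judged on its own terms, your proposal is the standard textbook proof of the chain relation and it is sound. The reduction to a single model configuration via change of coordinates is legitimate (a regular neighbourhood of a chain of three curves with the stated intersection pattern is always a two-holed torus, and Lemma~\ref{lem:4} transports the relation); the Euler characteristic and boundary count for the double cover of the four-marked disc are right ($\chi = -2$, trivial monodromy around $\partial D$, hence two boundary circles and genus one); and the two points you flag as delicate do work out. In particular, the preimage of an annular neighbourhood of $\partial D$ is two annuli, each mapping \emph{homeomorphically} (not by a double cover) onto the base annulus, because the monodromy around $\partial D$ is trivial; this forces the full twist $\tau_{\partial D}$ to lift exactly to $\tau_\delta\tau_\epsilon$ and rules out any other exponents. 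One small point worth making explicit: since the deck involution swaps $\delta$ and $\epsilon$, you should fix the lift that is the identity on $\partial S$ (which exists and is unique precisely because each boundary circle of $S$ maps homeomorphically to $\partial D$) so that the Birman--Hilden map lands in the mapping class group fixing the boundary pointwise --- otherwise $\tau_\delta\tau_\epsilon$ would be invisible. The only genuinely alternative route is the more elementary but more tedious one of verifying the relation directly by the Alexander method, checking that both sides act identically on a system of arcs cutting the two-holed torus into discs; your covering-space argument buys a conceptual explanation (the relation is the image of the central element $(\sigma_1\sigma_2\sigma_3)^4$ of $B_4$) at the cost of importing the Birman--Hilden machinery.
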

\begin{figure}[hbt]
  \centering
  \quad
  \subfloat[A two-holed torus.\label{fig:chain-relation-real}]
  {\includegraphics[trim=-10 0 -10 0,clip]{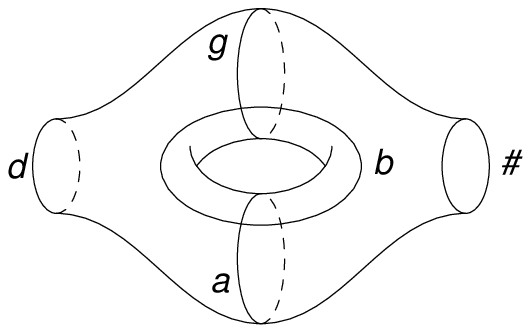}}
  \hfill
  \subfloat[A more schematic picture.\label{fig:chain-relation-schematic}]
  {\includegraphics[trim=-10 0 -10
    0,clip]{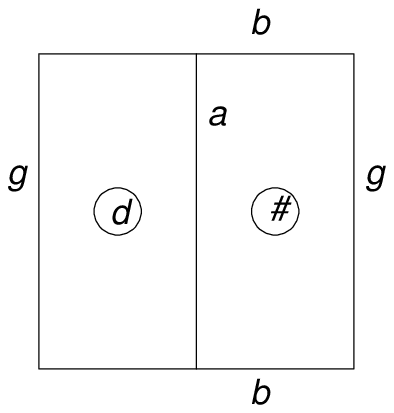}}
  \quad\strut
  \caption{The chain relation.}
  \label{fig:chain-relation}
\end{figure}
\begin{lemma}[Lantern relation]
  \label{lem:8}
  Consider the surface $\Sigma_{0,4}$, ie. a sphere with four holes.
  Let $\gamma_i$ denote the $i$'th boundary component, $0\leq i\leq
  3$, and $\gamma_{ij}$ a loop enclosing the $i$'th and $j$'th
  boundary components, $1\leq i<j\leq 3$. Let $\tau_i =
  \tau_{\gamma_i}$ and $\tau_{ij} = \tau_{\gamma_{ij}}$. Then
    \begin{align}
      \label{eq:62}
      \tau_0\tau_1\tau_2\tau_3 = \tau_{12}\tau_{13}\tau_{23}.
    \end{align}
\end{lemma}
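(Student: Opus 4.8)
The plan is to interpret \eqref{eq:62} as an identity in $\Map(\Sigma_{0,4})$, the mapping class group of the four-holed sphere with each boundary component fixed pointwise, and to verify it by the Alexander method. Fix a concrete model of $\Sigma_{0,4}$ as a round disk $D$ with three small open subdisks removed: let $\gamma_0 = \partial D$ be the outer boundary and $\gamma_1,\gamma_2,\gamma_3$ the inner ones, placed at the vertices of a triangle, and draw $\gamma_{12},\gamma_{13},\gamma_{23}$ as the obvious embedded loops, $\gamma_{ij}$ encircling holes $i$ and $j$ while keeping the third hole outside. The boundary-parallel twists $\tau_0,\tau_1,\tau_2,\tau_3$ are central in $\Map(\Sigma_{0,4})$, so the left-hand side of \eqref{eq:62} is unambiguous; the order of the three factors on the right is a genuine matter of convention, and once the orientation of the twists is fixed (say, all left twists) the identity holds as written, a different choice merely permuting or inverting the three factors.

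The next step is to choose a filling arc system. Let $\alpha_i$ be a properly embedded arc from $\gamma_0$ to $\gamma_i$, for $i=1,2,3$, pairwise disjoint and drawn radially, so that cutting $\Sigma_{0,4}$ along $\alpha_1\cup\alpha_2\cup\alpha_3$ yields a single disk (the Euler characteristic goes from $-2$ to $-2+3 = 1$, and connectedness is clear from the picture). By the Alexander method, an element of $\Map(\Sigma_{0,4})$ is determined by the isotopy classes rel $\partial$ of $\alpha_1,\alpha_2,\alpha_3$, so it suffices to check that $\tau_0\tau_1\tau_2\tau_3$ and $\tau_{12}\tau_{13}\tau_{23}$ carry each $\alpha_i$ to the same isotopy class. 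This reduces to a bounded computation: the arcs can be positioned so that $\tau_i$ moves only $\alpha_i$ (inserting a small loop around hole $i$ at that endpoint), $\tau_0$ modifies all three near their $\gamma_0$-endpoints, and $\tau_{ij}$ moves precisely $\alpha_i$ and $\alpha_j$ (dragging each once around $\gamma_{ij}$); so in each of the six cases only two or three twists act nontrivially, and one simply compares the two resulting arcs up to isotopy.

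The main obstacle is precisely this bookkeeping: keeping the successive images of the $\alpha_i$ in minimal position, identifying the resulting isotopy classes correctly, and fixing orientation conventions so that no spurious inverse twist creeps in --- the cancellations that make the two words agree are exactly where one must be careful (in $\pi_1$ of the holed disk, the dragging of $\alpha_1$ around $\gamma_{13}$ and then $\gamma_{12}$ must reduce to the loops produced by $\tau_1$ and $\tau_0$, and similarly for $\alpha_2,\alpha_3$). A useful partial check is to cap $\gamma_0$ with a disk: this kills $\tau_0$ and turns $\Sigma_{0,4}$ into a pair of pants, under which $\gamma_{12},\gamma_{13},\gamma_{23}$ become boundary-parallel to $\gamma_3,\gamma_2,\gamma_1$, so \eqref{eq:62} degenerates to the trivially true identity $\tau_1\tau_2\tau_3 = \tau_3\tau_2\tau_1$ in $\Map(\Sigma_{0,3})\cong\setZ^3$; the full relation, however, genuinely lives one level up in $\Map(\Sigma_{0,4})$, and the arc computation above is what establishes it. Complete details of the isotopies may be found in the standard references on the lantern relation (e.g.\ Dehn, Johnson, or Farb--Margalit).
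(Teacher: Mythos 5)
The paper offers no proof of this lemma: the lantern relation is quoted in Section~\ref{sec:twists-relations} as one of several classical facts about Dehn twists (going back to Dehn and Johnson), so there is nothing internal to compare your argument against. Your outline is the standard textbook verification (it is essentially the one in Farb--Margalit): the setup is correct --- the Euler characteristic count showing that the three arcs $\alpha_1,\alpha_2,\alpha_3$ cut $\Sigma_{0,4}$ into a single disk, the centrality of the boundary twists, the observation that each boundary component meets at least one arc (which is what makes the Alexander method sensitive to boundary-parallel twists), and the capping-off sanity check are all sound. Two caveats. First, the decisive step --- actually tracking the images of the $\alpha_i$ under the two words and matching them up to isotopy rel $\partial$ --- is described but not executed; as written this is a correct and complete \emph{plan}, with the entire content of the proof deferred to ``a bounded computation'' and to the references. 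Second, the remark that a different convention ``merely permut[es] or invert[s] the three factors'' is too casual: the curves $\gamma_{12},\gamma_{13},\gamma_{23}$ pairwise intersect, so their twists do not commute, and only the orderings compatible with the chosen configuration (cyclic rotations of $\tau_{12}\tau_{13}\tau_{23}$ for left twists, in the standard picture) yield a valid relation --- an arbitrary permutation does not. Since the paper uses the relation only through Corollary~\ref{cl:4} and the proof of Theorem~\ref{thm:4}, where any valid form of the lantern relation with all seven curves non-separating suffices, this imprecision is harmless for the application, but it should not be stated as if the ordering were immaterial.
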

For a picture of the lantern relation, see the left-hand part of
Figure~\vref{fig:lantern-non-sep}.
\begin{corollary}
  \label{cl:4}
  If $g\geq 2$, the Dehn twist on a boundary component of $\Sigma_{g,r}$
  can be written in terms of Dehn twists on non-separating curves.
\end{corollary}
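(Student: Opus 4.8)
The plan is to use the lantern relation (Lemma \ref{lem:8}) to express a boundary twist as a product of twists on curves that, although they may separate the small sphere $\Sigma_{0,4}$, become non-separating once we embed that sphere into a surface of genus $g \geq 2$. First I would reduce to the case $r = 1$: if $\Sigma_{g,r}$ has several boundary components, pick one boundary component $\delta$ and note that $\tau_\delta$ is supported in a neighborhood of $\delta$, so it suffices to realize $\tau_\delta$ inside a subsurface of genus $g$ with one boundary component $\delta$; any sufficiently large subsurface of $\Sigma_{g,r}$ works since $g \geq 2$ gives us room.

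Next I would find an embedded copy of $\Sigma_{0,4}$ in $\Sigma_{g,1}$ whose boundary component $\gamma_0$ is isotopic to $\delta$ (the boundary of $\Sigma_{g,1}$), and whose other three boundary curves $\gamma_1, \gamma_2, \gamma_3$ are \emph{non-separating} in $\Sigma_{g,1}$, and moreover such that the three curves $\gamma_{12}, \gamma_{13}, \gamma_{23}$ appearing on the right-hand side of \eqref{eq:62} are also non-separating in $\Sigma_{g,1}$. Here genus $\geq 2$ is exactly what is needed: one can lay out the four-holed sphere so that two of its holes are capped by handles (making the corresponding $\gamma_i$ non-separating) and the remaining configuration of curves stays non-separating. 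The standard picture is the right-hand part of Figure~\vref{fig:lantern-non-sep}, referenced in the text immediately before the corollary. Rewriting \eqref{eq:62} as
\begin{equation*}
  \tau_{\gamma_0} = \tau_1\inv\tau_2\inv\tau_3\inv\tau_{12}\tau_{13}\tau_{23}
\end{equation*}
(using that disjoint twists commute so the ordering on the left can be rearranged, Lemma \ref{lem:5}) then expresses $\tau_\delta = \tau_{\gamma_0}$ as a word in Dehn twists on non-separating curves and their inverses, which is what is claimed.

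The main obstacle is purely combinatorial-topological: verifying that one can genuinely embed $\Sigma_{0,4}$ so that \emph{all six} relevant curves ($\gamma_1,\gamma_2,\gamma_3$ and $\gamma_{12},\gamma_{13},\gamma_{23}$) are simultaneously non-separating in the ambient surface. A curve on a surface is non-separating precisely when it is nonzero in $H_1$ with $\setZ/2$ coefficients, so this can be checked by tracking homology classes; the $g \geq 2$ hypothesis provides enough handles to make this work, whereas for $g \leq 1$ it would fail. Once the picture is fixed, everything else is immediate from the lantern relation and Lemma \ref{lem:5}.
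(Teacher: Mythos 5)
Your proof is correct and takes essentially the same route as the paper: embed the lantern so that $\gamma_0$ is the given boundary component and the other six curves are non-separating, then solve the relation \eqref{eq:62} for $\tau_0$. The paper's explicit recipe for the embedding is to glue three boundary components of a single connected $\Sigma_{g-2,r+2}$ onto $\gamma_1,\gamma_2,\gamma_3$ (which makes the non-separating check for all six curves immediate); your ``cap two holes with handles'' phrasing is the only vague spot, since attaching a separate handle to an individual $\gamma_i$ would leave that curve separating.
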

\begin{proof}
  The assumption on the genus implies that we may find an embedding of
  $\Sigma_{0,4}\to\Sigma_{g, r}$ such that $\gamma_0$ is mapped to the
  boundary component in question and the remaining six curves involved
  in the lantern relation are mapped to non-separating curves (think
  of $\Sigma_{g,r}$ as being obtained by gluing three boundary
  components of $\Sigma_{g-2, r+2}$ to $\gamma_1$, $\gamma_2$ and
  $\gamma_3$, respectively). Then the relation $\tau_0 =
  \tau_{12}\tau_{13}\tau_{23}\tau_3\inv\tau_2\inv\tau_1\inv$ also
  holds in $\Gamma_{g,r}$.
\end{proof}
\begin{corollary}
  \label{cl:5}
  When $g\geq 3$, $\Gamma_{g,r}$ is generated by Dehn twists on
  \emph{non-separating} curves.
\end{corollary}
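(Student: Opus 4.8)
The plan is to reduce everything to Corollary~\ref{cl:4}. I start from the classical theorem of Dehn and Lickorish that $\Gamma_{g,r}$ is generated by the set of \emph{all} Dehn twists on simple closed curves. A twist on a non-separating curve is already of the required type, so it suffices to show that each $\tau_\gamma$ with $\gamma$ separating can be rewritten as a word in twists on non-separating curves; we may also assume $\gamma$ is essential, since a twist on a null-homotopic curve is trivial.

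So let $\gamma$ be an essential separating simple closed curve on $\Sigma_{g,r}$. Cutting along $\gamma$ produces two subsurfaces whose genera sum to $g$, so by the pigeonhole principle together with the hypothesis $g\geq 3$ at least one of them, say $S$, has genus $h\geq 2$. Inside $S$ the curve $\gamma$ appears as a boundary component, and the twist $\tau_\gamma$ of $\Sigma_{g,r}$ is supported in an annular neighbourhood of $\gamma$ that is contained in $S$; hence $\tau_\gamma$ is the image, under the inclusion $S\hookrightarrow\Sigma_{g,r}$, of the Dehn twist on this boundary component of $S\cong\Sigma_{h,s}$. Since $h\geq 2$, Corollary~\ref{cl:4} expresses this twist, and therefore $\tau_\gamma$, as a product of Dehn twists on curves that are non-separating \emph{in $S$}.

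To finish I must check that a curve $\alpha$ which is non-separating in $S$ remains non-separating in $\Sigma_{g,r}$. As $\alpha$ lies in the interior of $S$ and is disjoint from $\partial S$, the complement $S\setminus\alpha$ is connected and still contains $\gamma$ in its boundary; gluing the other complementary component of $\gamma$ in $\Sigma_{g,r}$, which is connected, back along $\gamma$ then produces a connected surface, namely $\Sigma_{g,r}\setminus\alpha$. Hence $\alpha$ is non-separating in $\Sigma_{g,r}$, and combining this with the previous paragraph shows that every Dehn twist, and therefore all of $\Gamma_{g,r}$, lies in the subgroup generated by Dehn twists on non-separating curves.

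I do not expect a serious obstacle here: the argument is bookkeeping on top of Corollary~\ref{cl:4}. The one place where the hypothesis $g\geq 3$ is genuinely used is the pigeonhole step producing a complementary side of genus at least $2$ (for $g=2$ a separating curve can split into two genus-one pieces, and a separate argument is needed there). The only point requiring a little care is tracking that the curves appearing in Corollary~\ref{cl:4}, which are non-separating in the subsurface $S$, stay non-separating in the ambient surface $\Sigma_{g,r}$.
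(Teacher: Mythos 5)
Your proof is correct and follows essentially the same route as the paper: start from generation by all Dehn twists, cut along a separating curve, and apply Corollary~\ref{cl:4} to the complementary component of genus at least $2$. The only difference is that you spell out the (true and worth noting) verification that curves non-separating in the subsurface remain non-separating in $\Sigma_{g,r}$, which the paper leaves implicit.
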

\begin{proof}
  We already know that the mapping class group is generated by Dehn
  twists. If $\gamma$ is a separating curve in $\Sigma$, cut $\Sigma$
  along $\gamma$ and apply Corollary~\ref{cl:4} to the component which
  has genus $\geq 2$, showing that $\tau_\gamma$ can be written in
  terms of twists on non-separating curves in $\Sigma$.
\end{proof}

\subsection{Action on homology}
\label{sec:action-homology}

Let $\gamma$ be a simple closed curve on $\Sigma$, let
$\vec\gamma$ denote one of its oriented versions, and let
$[\vec\gamma]\in H_1(\Sigma)$ denote the homology class of
$\vec\gamma$. Then for any homology class $m\in H_1(\Sigma)$, the
action of $\tau_\gamma$ on $m$ is given by the formula
\begin{align}
  \label{eq:8}
  \tau_\gamma m = m + i([\vec\gamma], m) [\vec\gamma],
\end{align}
where $i(\cdot, \cdot)$ denotes the intersection pairing on homology.
Clearly, the right-hand side of \eqref{eq:8} is independent of the
choice of orientation of $\gamma$. By induction and using linearity
and antisymmetry of $i$, \eqref{eq:8} may be generalized to
\begin{align}
  \label{eq:9}
  \tau_\gamma^n m = m + ni([\vec\gamma], m) [\vec\gamma]
\end{align}
This formula immediately implies an important fact.
\begin{lemma}
  \label{lem:11}
  If $\tau_\gamma$ acts non-trivially on $m$, the orbit
  $\{\tau_\gamma^n m \mid n\in\setZ\}$ is infinite.
\end{lemma}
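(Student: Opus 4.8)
The plan is to extract everything directly from formula~\eqref{eq:9}. Assume $\tau_\gamma$ acts non-trivially on $m$. The first step is to translate this hypothesis into an arithmetic statement: by~\eqref{eq:8} we have $\tau_\gamma m - m = i([\vec\gamma], m)\,[\vec\gamma]$, so if either $[\vec\gamma] = 0$ or the integer $c := i([\vec\gamma], m)$ vanishes, then $\tau_\gamma m = m$, contradicting the hypothesis. Hence $c$ is a \emph{non-zero} integer and $[\vec\gamma]$ is a non-zero class in $H_1(\Sigma)$.

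Next I would invoke the structure of $H_1(\Sigma)$: it is a free abelian group (isomorphic to $\setZ^{2g}$ in both the closed and the one-puncture case), hence torsion-free. Therefore the non-zero element $[\vec\gamma]$ has infinite order, and since $c\neq 0$ the elements $nc\,[\vec\gamma]$ for $n\in\setZ$ are pairwise distinct. By~\eqref{eq:9} we have $\tau_\gamma^n m = m + nc\,[\vec\gamma]$, so the map $n\mapsto \tau_\gamma^n m$ is injective and the orbit $\{\tau_\gamma^n m \mid n\in\setZ\}$ is infinite, as claimed.

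There is essentially no obstacle here; the only point that requires a moment's care is the first step, namely converting ``acts non-trivially on $m$'' into the concrete conditions $[\vec\gamma]\neq 0$ and $c\neq 0$, after which torsion-freeness of $H_1(\Sigma)$ closes the argument immediately.
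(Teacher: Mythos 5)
Your argument is correct and is exactly the one the paper intends: the paper states that formula~\eqref{eq:9} ``immediately implies'' the lemma, and your write-up simply spells out the two implicit steps (that non-trivial action forces $i([\vec\gamma],m)\neq 0$ and $[\vec\gamma]\neq 0$, and that torsion-freeness of $H_1(\Sigma)$ then makes $n\mapsto m+n\,i([\vec\gamma],m)[\vec\gamma]$ injective). No discrepancy with the paper's approach.
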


Let $(x_1, y_1, \ldots, x_g, y_g)$ be a $2g$-tuple of oriented simple
closed curves representing a \emph{symplectic basis} for
$H_1(\Sigma)$; that is, $i(x_j, y_j) = 1$ and $i(x_j,x_k) = i(y_j,y_k) =
0$ for all $j,k$ and $i(x_j,y_k) = 0$ for $j\not=k$. Such a basis
induces a norm on $H_1(\Sigma)$ by putting
\begin{equation}
  \label{eq:10}
  \abs{m} = \abs{a_1} + \abs{b_1} + \cdots + \abs{a_g} + \abs{b_g}
\end{equation}
for $m = a_1x_1 + b_1y_1 + \cdots + a_gx_g + b_gy_g$.

We will need the following little technical result later.
\begin{lemma}
  \label{lem:12}
  Given any symplectic basis and any non-zero homology element $m$,
  there exists a curve $\gamma$ such that at least one of the
  sequences $\abs{\tau_\gamma^n m}$, $\abs{\tau_\gamma^{-n}}$,
  $n=0,1,2,\ldots$, is strictly increasing.
\end{lemma}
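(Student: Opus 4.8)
The plan is to observe that twisting along one of the curves of the given symplectic basis moves only a single coordinate of a homology class, which reduces the statement to a one-variable question about absolute values. Write $m = a_1 x_1 + b_1 y_1 + \cdots + a_g x_g + b_g y_g$. Since $m\neq 0$ some coordinate is non-zero; if all the $a_i$ vanish then some $b_i$ does not, and one argues with $\gamma = x_i$ exactly as below (with the two kinds of coordinates interchanged), so we may as well assume $a_1 \neq 0$ and take $\gamma = y_1$.

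By \eqref{eq:9} and the defining relations of a symplectic basis one computes $i([\vec y_1], m) = -a_1$, so that $\tau_{y_1}^{n} m = m - n a_1 [\vec y_1]$ for every integer $n$; in other words $\tau_{y_1}^{n} m$ has the same coordinates as $m$ except that its $y_1$-coordinate equals $b_1 - n a_1$. Hence
\begin{equation*}
  \abs{\tau_{y_1}^{n} m} = \bigl(\abs{m} - \abs{b_1}\bigr) + \abs{b_1 - n a_1},
  \qquad
  \abs{\tau_{y_1}^{-n} m} = \bigl(\abs{m} - \abs{b_1}\bigr) + \abs{b_1 + n a_1}
\end{equation*}
for all $n$, and the claim reduces to showing that one of the two integer sequences $n \mapsto \abs{b_1 - n a_1}$, $n \mapsto \abs{b_1 + n a_1}$, $n = 0,1,2,\ldots$, is strictly increasing. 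Since $a_1 \neq 0$, the function $t \mapsto \abs{b_1 - t a_1} = \abs{a_1}\,\abs{t - b_1/a_1}$ is strictly increasing on $[b_1/a_1, \infty)$, and $t \mapsto \abs{b_1 + t a_1}$ is strictly increasing on $[-b_1/a_1, \infty)$.

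To conclude I would distinguish two cases. If $b_1/a_1 \leq 0$, then $0,1,2,\ldots$ all lie in $[b_1/a_1,\infty)$, so $(\abs{\tau_{y_1}^{n} m})_{n\geq 0}$ is strictly increasing. If $b_1/a_1 > 0$, then $-b_1/a_1 < 0$, so $0,1,2,\ldots$ all lie in $[-b_1/a_1,\infty)$, and $(\abs{\tau_{y_1}^{-n} m})_{n\geq 0}$ is strictly increasing. In either case the lemma follows. I do not expect any real obstacle here; the only points needing care are the choice to twist along a basis curve (so that exactly one coordinate moves) and the elementary case distinction that decides whether positive or negative powers must be used.
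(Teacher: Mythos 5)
Your proof is correct and follows essentially the same route as the paper's: twist along the basis curve $y_1$ dual to a non-vanishing coordinate $a_1$, note via \eqref{eq:9} that only the $y_1$-coordinate changes (linearly in $n$), and finish with an elementary sign analysis deciding whether positive or negative powers give the strictly increasing sequence. The only cosmetic difference is that you phrase the case split in terms of the sign of $b_1/a_1$ and keep track of the sign of $i([\vec y_1],m)$ explicitly, where the paper says ``same sign / opposite sign''; the content is identical.
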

\begin{proof}
  Let $(a_1, b_1, \ldots, a_g, b_g)\in\setZ^{2g}$ be the coordinates
  of $m$ with respect to the given basis. At least one of these
  coordinates is non-zero. Assume WLOG $a_1 \not= 0$ and put $\gamma =
  b_1$. Then, for any $n\in\setZ$, the coordinates of $\tau_\gamma^n
  m$ are
  \begin{equation*}
    (a_1, b_1 + na_1, a_2, b_2, \ldots, a_g, b_g)
  \end{equation*}
  by \eqref{eq:9} above. Then clearly if $a_1$ and $b_1$ have the same
  sign ($b_1$ may be $0$), the sequence $\abs{\tau_\gamma^nm}$ is
  increasing, while if $a_1$ and $b_1$ have opposite signs the
  sequence $\abs{\tau_\gamma^{-n}m}$ is increasing.
\end{proof}
Note that we may in fact in all cases choose the Dehn twist from a
finite collection of twists.

\subsection{The Torelli group}
\label{sec:torelli-group}

An important subgroup of $\Gamma$ is the \emph{Torelli group}
$\mathcal{T}$, which by definition is the kernel of the homomorphism
$\Gamma\to\Sp(H_1(\Sigma)) \cong \Sp_{2g}(\setZ)$. By work of
Johnson~\cite{MR529227}, it is known that the Torelli group is
generated by \emph{genus $1$ bounding pair maps}. By definition, a
bounding pair is a pair $(\gamma, \delta)$ of non-isotopic,
non-separating simple closed curves $\gamma, \delta$, such that the
union $\gamma\cup \delta$ separates the surface. The genus of such a
pair is, in the case of a closed surface, the minimum of the genera of
the two subsurfaces separated by $\gamma\cup \delta$, and in the case
of a once-puncture surface, the genus of the subsurface not containing
the puncture. The bounding pair map (or BP map) associated to
$(\gamma,\delta)$ is the map $\tau_\gamma\tau_\delta\inv$. Since
$\gamma$ and $\delta$ are homologous, $\tau_\gamma$ and $\tau_\delta$
acts identically on the homology of $\Sigma$, so it is trivial that
bounding pair maps belong to the Torelli group.

%

\section{Unitary representations}
\label{sec:unit-repr}

In this section, we will observe some general facts about cocycles on
the mapping class group with values in a unitary representation.
Throughout this section, let $V$ be a real or complex Hilbert
space endowed with an action of $\Gamma$ preserving the inner product.

For a simple closed curve $\gamma$, we let $V_\gamma =
V^{\tau_\gamma}$ denote the set of vectors fixed under the action of
the twist $\tau_\gamma$, and we let $p_\gamma\colon V\to V_\gamma$
denote the orthogonal projection onto the (obviously closed) subspace
$V_\gamma$. If $\alpha$ and $\gamma$ are disjoint simple closed
curves, the unitary actions $\tau_\alpha$ and $\tau_\gamma$ on $V$
commute. Hence the associated projections $p_\alpha$ and $p_\gamma$
commute with each other and with $\tau_\alpha,\tau_\gamma$. If
$\phi\tau_\alpha \phi\inv = \tau_\beta$, then $\phi p_\alpha \phi\inv
= p_\beta$ for $\phi\in\Gamma$.

\subsection{A satisfied coboundary condition}
\label{sec:satisf-cobo-cond}

From now on, let $u$ denote a fixed cocycle. We will now investigate a
certain condition for $u$ to be a coboundary, which will turn out to
be satisfied whenever $g\geq 3$. If $u(\phi) = (1-\phi)v$ for some
vector $v$, it is clear that $u(\phi)$ is killed by the projection
onto the subspace $V^\phi$ fixed by $\phi$. Hence if $\alpha$ is a
simple closed curve, it is natural to consider the entity $p_\alpha
u(\tau_\alpha)$. The main theorem of this section is
\begin{theorem}
  \label{thm:4}
  Let $\Sigma$ be a surface of genus at least $3$ and let $V$ be a
  unitary representation of the mapping class group $\Gamma$ of
  $\Sigma$. For any cocycle $u\colon \Gamma\to V$ and any simple
  closed curve $\alpha$ we have $p_\alpha u(\tau_\alpha) = 0$.
\end{theorem}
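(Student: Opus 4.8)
The plan is to analyse the vector $w_\alpha := p_\alpha u(\tau_\alpha) \in V_\alpha$ and to show $w_\alpha = 0$. The first step is an equivariance property. Since $\tau_{f(\alpha)} = f\tau_\alpha f\inv$ by Lemma~\ref{lem:4}, we also have $p_{f(\alpha)} = f p_\alpha f\inv$, and $p_{f(\alpha)}(1-\tau_{f(\alpha)}) = 0$ because $\tau_{f(\alpha)}$ is unitary and fixes $V_{f(\alpha)}$ pointwise. Applying $p_{f(\alpha)}$ to the identity $u(f\tau_\alpha f\inv) = (1-\tau_{f(\alpha)})u(f) + f u(\tau_\alpha)$ and using $p_{f(\alpha)}f = f p_\alpha$ therefore gives $w_{f(\alpha)} = f\,w_\alpha$ for all $f \in \Gamma$. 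Two consequences follow at once. First, $\lVert w_\alpha\rVert$ is constant on each $\Gamma$-orbit of curves; as the non-separating curves form one orbit, $\lVert w_\alpha\rVert$ equals a single number $c \ge 0$ for every non-separating $\alpha$. Second, taking $f$ in the centralizer of $\tau_\alpha$ (equivalently, $f$ fixing the isotopy class of $\alpha$) shows $w_\alpha$ is fixed by all of $Z_\Gamma(\tau_\alpha)$; in particular $w_\alpha \in V_\beta$ whenever $\beta$ is disjoint from $\alpha$. (This last point also follows directly: $\tau_\alpha\tau_\beta = \tau_\beta\tau_\alpha$ by Lemma~\ref{lem:5} gives $(1-\tau_\beta)u(\tau_\alpha) = (1-\tau_\alpha)u(\tau_\beta)$ via the cocycle relation, and applying $p_\alpha$, which commutes with $\tau_\beta$ by disjointness, kills the right-hand side.)

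The heart of the matter is to show $c = 0$, and this is where $g \ge 3$ enters, through the lantern relation. For $g \ge 3$ one can embed a four-holed sphere in $\Sigma$ so that all seven curves $\gamma_0,\gamma_1,\gamma_2,\gamma_3,\gamma_{12},\gamma_{13},\gamma_{23}$ of the relation $\tau_0\tau_1\tau_2\tau_3 = \tau_{12}\tau_{13}\tau_{23}$ from Lemma~\ref{lem:8} are non-separating, the $\gamma_i$ being pairwise disjoint and each disjoint from every $\gamma_{jk}$. Applying $u$ to this relation and pairing the resulting vector identity with an arbitrary $v$ fixed by all seven twists, the disjointness makes $\tau_0,\dots,\tau_3$ act trivially on $v$, so the left side reduces to $\langle w_0+w_1+w_2+w_3,\,v\rangle$ and the right side to $\langle w_{12}+w_{13}+w_{23},\,v\rangle$. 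Since each $w_i$ is itself fixed by all seven twists (their curves all being disjoint from $\gamma_i$), the vector $w_0+w_1+w_2+w_3$ lies in the subspace of such fixed vectors and is therefore the orthogonal projection of $w_{12}+w_{13}+w_{23}$ onto it; hence $\lVert w_0+w_1+w_2+w_3\rVert \le \lVert w_{12}\rVert + \lVert w_{13}\rVert + \lVert w_{23}\rVert = 3c$. The remaining input is a matching lower bound: using the equivariance $w_{f(\gamma)} = f w_\gamma$, the transitivity of $\Gamma$ on the configurations involved, and the symmetries of the four-holed sphere, one controls the inner products $\langle w_i, w_j\rangle$ tightly enough — indeed one shows the four vectors $w_{\gamma_i}$ coincide — to conclude $\lVert w_0+w_1+w_2+w_3\rVert = 4c$. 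Comparing the two bounds forces $c = 0$, i.e. $p_\alpha u(\tau_\alpha) = 0$ for every non-separating $\alpha$.

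The separating case then reduces to this. Given a separating curve $\gamma$, Corollary~\ref{cl:4} (applicable because for $g \ge 3$ one side of $\gamma$ has genus at least $2$) exhibits $\gamma$ as the curve $\gamma_0$ of a four-holed sphere whose other six curves are non-separating and disjoint from $\gamma$. Running the pairing argument above for this lantern relation, and using that $w_{\gamma_j} = w_{\gamma_{jk}} = 0$ for the six non-separating curves, yields $\langle w_\gamma, v\rangle = 0$ for every $v$ fixed by all seven twists; but $w_\gamma$ is itself such a vector, since the six curves lie in $Z_\Gamma(\tau_\gamma)$, whence $w_\gamma = 0$.

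The step I expect to be the main obstacle is the lower bound $\lVert w_0+w_1+w_2+w_3\rVert = 4c$ in the second paragraph — equivalently, that the vectors attached to the four boundary curves of a four-holed sphere all agree. The upper bound $3c$ drops out routinely once one has the equivariance and keeps track of which curves are disjoint, but establishing the lower bound forces one to combine the $\Gamma$-equivariance of $\gamma \mapsto w_\gamma$ with the symmetry of the configuration and the cocycle identities, and that is the delicate part of the argument.
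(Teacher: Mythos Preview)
Your equivariance and centralizer-invariance lemmas are correct and match the paper (their Lemmas~\ref{lem:9} and~\ref{lem:13}), as does your reduction of the separating case. The upper bound $\lVert w_0+w_1+w_2+w_3\rVert\le 3c$ from the lantern relation is also correctly derived.

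The gap is exactly where you flag it: the claimed lower bound $\lVert w_0+w_1+w_2+w_3\rVert=4c$, i.e.\ that the four boundary vectors $w_{\gamma_i}$ coincide. You have not proved this, and there is no evident mechanism to do so. Equivariance gives $w_{\gamma_j}=f\,w_{\gamma_i}$ for any $f$ taking $\gamma_i$ to $\gamma_j$, but such an $f$ is never in the centralizer of $\tau_{\gamma_i}$, so you cannot conclude $f\,w_{\gamma_i}=w_{\gamma_i}$. Even granting the weaker statement that all cross terms $\langle w_{\gamma_i},w_{\gamma_j}\rangle$ equal a common real constant $c'$ (which would follow from a Lemma~\ref{lem:10}-type transitivity argument if the pairs are jointly non-separating), your inequality only yields $4c^2+12c'\le 9c^2$, which does not force $c=0$.

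The paper circumvents this by using a \emph{second} relation before the lantern. It first defines $c=\langle s_\alpha,s_\beta\rangle$ for any pair of disjoint non-separating curves with $\alpha\cup\beta$ non-separating, shows this is well-defined by transitivity on such pairs, and then applies the \emph{chain relation} $(\tau_\alpha\tau_\beta\tau_\gamma)^4=\tau_\delta\tau_\epsilon$ embedded so that an auxiliary curve $\eta$ forms a non-separating pair with each of the five curves. Pairing with $s_\eta$ turns the twelve terms on the left into $12c$ and the two on the right into $2c$, whence $c=0$. Only then does the lantern relation enter: pairing with $s_{\gamma_0}$ now gives $\lVert s_{\gamma_0}\rVert^2$ on one side and a sum of cross terms $\langle s_{\gamma_i},s_{\gamma_0}\rangle$ and $\langle s_{\gamma_{jk}},s_{\gamma_0}\rangle$ on the other, all of which vanish because $c=0$. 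So the paper's argument is a two-step count ($12c=2c$, then $\lVert s_{\gamma_0}\rVert^2=0$), not a single norm comparison.
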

The proof of this theorem only requires the simple relations in the
mapping class group mentioned in Section~\ref{sec:twists-relations}.

We will use the shorthand notation $s_\alpha$ for $p_\alpha
u(\tau_\alpha)$.
\begin{lemma}
  \label{lem:9}
  The entity $s$ is natural in the sense that $s_{\phi(\alpha)} = \phi
  s_\alpha$ for $\phi\in\Gamma$ and any simple closed curve $\alpha$.
\end{lemma}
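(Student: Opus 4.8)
The statement to prove is Lemma~\ref{lem:9}: that $s_{\phi(\alpha)} = \phi s_\alpha$ for all $\phi \in \Gamma$ and all simple closed curves $\alpha$.

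\medskip

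The plan is to unwind the definitions and use the cocycle identity together with the conjugation formula for Dehn twists. First I would recall that $s_\alpha = p_\alpha u(\tau_\alpha)$, where $p_\alpha$ is the orthogonal projection onto $V_\alpha = V^{\tau_\alpha}$. By Lemma~\ref{lem:4} we have $\phi \tau_\alpha \phi\inv = \tau_{\phi(\alpha)}$, and as noted at the start of Section~\ref{sec:unit-repr}, this conjugation relation transports the fixed subspaces and hence the projections: $\phi p_\alpha \phi\inv = p_{\phi(\alpha)}$. So $s_{\phi(\alpha)} = p_{\phi(\alpha)} u(\tau_{\phi(\alpha)}) = \phi p_\alpha \phi\inv \, u(\phi \tau_\alpha \phi\inv)$, and it remains to relate $u(\phi\tau_\alpha\phi\inv)$ to $\phi u(\tau_\alpha)$ modulo terms that $\phi p_\alpha \phi\inv$ annihilates.

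\medskip

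The key computation uses the conjugation formula for cocycles recorded in Section~\ref{sec:group-cohomology}, namely $u(g h g\inv) = (1 - ghg\inv) u(g) + g u(h)$. Applying this with $g = \phi$ and $h = \tau_\alpha$ gives
\begin{equation*}
  u(\phi\tau_\alpha\phi\inv) = (1 - \phi\tau_\alpha\phi\inv) u(\phi) + \phi u(\tau_\alpha) = (1 - \tau_{\phi(\alpha)}) u(\phi) + \phi u(\tau_\alpha).
\end{equation*}
Now apply $p_{\phi(\alpha)} = \phi p_\alpha \phi\inv$ to both sides. Since $p_{\phi(\alpha)}$ projects onto the subspace fixed by $\tau_{\phi(\alpha)}$, it commutes with $\tau_{\phi(\alpha)}$ (as an element of a commuting pair of operators — or more directly, $p_{\phi(\alpha)}(1 - \tau_{\phi(\alpha)}) w = p_{\phi(\alpha)} w - \tau_{\phi(\alpha)} p_{\phi(\alpha)} w = p_{\phi(\alpha)} w - p_{\phi(\alpha)} w = 0$ for any $w$, because the image of $p_{\phi(\alpha)}$ is fixed by $\tau_{\phi(\alpha)}$). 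Hence the first term dies and we are left with $p_{\phi(\alpha)} u(\phi\tau_\alpha\phi\inv) = \phi p_\alpha \phi\inv \phi u(\tau_\alpha) = \phi p_\alpha u(\tau_\alpha) = \phi s_\alpha$. Since the left-hand side is exactly $s_{\phi(\alpha)}$, this finishes the proof.

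\medskip

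I do not anticipate a genuine obstacle here; the only point requiring a little care is the claim that the projection $p_{\phi(\alpha)}$ kills the range of $1 - \tau_{\phi(\alpha)}$, which follows because $p_{\phi(\alpha)}$ and $\tau_{\phi(\alpha)}$ commute (orthogonal projection onto the fixed subspace of a unitary operator commutes with that operator) and $p_{\phi(\alpha)}$ acts as the identity on its own image. This is a purely formal manipulation with the cocycle identity and the conjugation behavior of Dehn twists, both of which are available from the earlier sections.
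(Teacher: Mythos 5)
Your argument is correct and is essentially identical to the paper's proof: both unwind $s_{\phi(\alpha)} = p_{\phi(\alpha)}u(\tau_{\phi(\alpha)})$ via $\tau_{\phi(\alpha)} = \phi\tau_\alpha\phi\inv$, $p_{\phi(\alpha)} = \phi p_\alpha\phi\inv$, apply the cocycle conjugation formula, and observe that the projection annihilates the $(1-\tau_{\phi(\alpha)})u(\phi)$ term. Your added justification for why $p_{\phi(\alpha)}$ kills the range of $1-\tau_{\phi(\alpha)}$ is a correct elaboration of a step the paper leaves implicit.
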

\begin{proof}
  Since $\tau_{\phi(\alpha)} = \phi\tau_\alpha \phi\inv$, it is easy
  to see that $p_{\phi(\alpha)} = \phi p_\alpha \phi\inv$. Hence
  \begin{align*}
    s_{\phi(\alpha)} &= p_{\phi(\alpha)} u(\tau_{\phi(\alpha)})\\
    &= \phi p_\alpha \phi\inv u(\phi\tau_\alpha \phi\inv)\\
    &= \phi p_\alpha \phi\inv \bigl( (1-\phi \tau_\alpha \phi\inv)
    u(\phi) + \phi u(\tau_\alpha) \bigr)\\
    &= \phi p_\alpha u(\tau_\alpha)\\
    &= \phi s_\alpha
  \end{align*}
  as claimed.
\end{proof}
\begin{lemma}
  \label{lem:13}
  Let $\alpha$ be a simple closed curve, and let $\phi\in\Gamma$ be
  any element commuting with $\tau_\alpha$. Then $\phi s_\alpha =
  s_\alpha$.
\end{lemma}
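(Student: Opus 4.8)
The plan is to deduce this immediately from the naturality of $s$ established in Lemma~\ref{lem:9}. The key observation is that an element commuting with $\tau_\alpha$ must fix the Dehn twist $\tau_\alpha$ itself (as an element of $\Gamma$, not merely up to conjugacy), and once this is noticed the conclusion is essentially automatic.

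First I would observe that if $\phi$ commutes with $\tau_\alpha$, then $\phi\tau_\alpha\phi\inv = \tau_\alpha$, and hence by Lemma~\ref{lem:4} we have $\tau_{\phi(\alpha)} = \phi\tau_\alpha\phi\inv = \tau_\alpha$ as elements of $\Gamma$. Consequently the fixed subspaces agree, $V_{\phi(\alpha)} = V^{\tau_{\phi(\alpha)}} = V^{\tau_\alpha} = V_\alpha$, so the orthogonal projections coincide, $p_{\phi(\alpha)} = p_\alpha$; and of course $u(\tau_{\phi(\alpha)}) = u(\tau_\alpha)$ since the two mapping classes are literally equal. Plugging these into the definition $s_\beta = p_\beta u(\tau_\beta)$ gives $s_{\phi(\alpha)} = p_{\phi(\alpha)}u(\tau_{\phi(\alpha)}) = p_\alpha u(\tau_\alpha) = s_\alpha$. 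On the other hand, Lemma~\ref{lem:9} says $s_{\phi(\alpha)} = \phi s_\alpha$. Comparing the two expressions yields $\phi s_\alpha = s_\alpha$, which is the assertion.

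I do not expect a real obstacle here; the only subtlety worth flagging is that one never needs $\phi(\alpha)$ to be isotopic to $\alpha$ as an unoriented simple closed curve — although this is in fact true, since a nontrivial Dehn twist determines its curve up to isotopy — because only the equality $\tau_{\phi(\alpha)} = \tau_\alpha$ of mapping classes enters the definition of $s_{\phi(\alpha)}$. For completeness one could also give a direct verification avoiding Lemma~\ref{lem:9}: writing $\phi\tau_\alpha = \tau_\alpha\phi$ and expanding both sides with the cocycle condition \eqref{eq:12} gives $\phi u(\tau_\alpha) = u(\tau_\alpha) - (1-\tau_\alpha)u(\phi)$, and applying $p_\alpha$ — which commutes with $\phi$ (as $\phi p_\alpha\phi\inv = p_{\phi(\alpha)} = p_\alpha$) and annihilates the image of $1-\tau_\alpha$ (since $p_\alpha\tau_\alpha = p_\alpha$) — yields $\phi s_\alpha = p_\alpha\phi u(\tau_\alpha) = p_\alpha u(\tau_\alpha) = s_\alpha$ directly.
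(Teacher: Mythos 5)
Your argument is correct. Your primary route is a genuinely different packaging from the paper's: you observe that commutation plus Lemma~\ref{lem:4} forces the literal identity $\tau_{\phi(\alpha)} = \phi\tau_\alpha\phi\inv = \tau_\alpha$ in $\Gamma$, so that $s_{\phi(\alpha)} = s_\alpha$ simply because $s_\beta = p_\beta u(\tau_\beta)$ depends only on the mapping class $\tau_\beta$, and then the conclusion drops out of the naturality $s_{\phi(\alpha)} = \phi s_\alpha$ of Lemma~\ref{lem:9}. The paper instead runs a fresh cocycle computation: apply $u$ to $\phi\tau_\alpha = \tau_\alpha\phi$ to get $u(\phi) + \phi u(\tau_\alpha) = u(\tau_\alpha) + \tau_\alpha u(\phi)$, hit both sides with $p_\alpha$ so the $u(\phi)$ terms cancel via $p_\alpha\tau_\alpha = p_\alpha$, and finish by commuting $p_\alpha$ past $\phi$ --- which is exactly the ``direct verification'' you append at the end. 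Your main route buys economy (Lemma~\ref{lem:13} becomes a formal corollary of Lemma~\ref{lem:9} rather than a parallel computation), while the paper's version is self-contained and makes no appeal to how $s$ transforms under general mapping classes; the two are of course the same computation once Lemma~\ref{lem:9} is unwound. You are also right that nothing requires $\phi(\alpha)$ to be isotopic to $\alpha$, only the equality of the twists as group elements.
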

\begin{proof}
  We have $\phi\tau_\alpha = \tau_\alpha \phi$. Applying $u$ and the
  cocycle condition we obtain the equation $u(\phi) + \phi
  u(\tau_\alpha) = u(\tau_\alpha) + \tau_\alpha u(\phi)$. Applying
  $p_\alpha$ on both sides, the terms involving $u(\phi)$ cancel
  (since obviously $p_\alpha \tau_\alpha = p_\alpha$), so we obtain
  $p_\alpha \phi u(\tau_\alpha) = s_\alpha$. The claim then follows
  from the fact that $p_\alpha$ and $\phi$ commute.
\end{proof}

Assume $\alpha$ and $\beta$ are two non-separating simple closed
curves such that $\alpha\cup \beta$ is non-separating, and consider
the number $c_{\alpha\beta} = \inner{s_\alpha, s_\beta}$.
\begin{lemma}
  \label{lem:10}
  The number $c_{\alpha\beta}$ only depends on the cocycle $u$, not on
  the pair $(\alpha, \beta)$ used to compute it.
\end{lemma}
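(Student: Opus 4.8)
The strategy is to show that $c_{\alpha\beta}$ is unchanged under ``elementary moves'' relating one admissible pair $(\alpha,\beta)$ to another, and then to invoke the connectedness of the graph of such pairs. Concretely, I would proceed in three stages.

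First, I would establish that $c_{\alpha\beta}$ is symmetric and $\Gamma$-invariant in the obvious sense: by Lemma~\ref{lem:9} we have $s_{\phi(\alpha)} = \phi s_\alpha$ and $s_{\phi(\beta)} = \phi s_\beta$, and since $\phi$ acts unitarily, $\inner{s_{\phi(\alpha)}, s_{\phi(\beta)}} = \inner{\phi s_\alpha, \phi s_\beta} = \inner{s_\alpha, s_\beta}$. Thus $c_{\alpha\beta} = c_{\phi(\alpha)\phi(\beta)}$ for every $\phi\in\Gamma$. By the change-of-coordinates principle for surfaces, the mapping class group acts transitively on isotopy classes of pairs of non-separating curves $(\alpha,\beta)$ with $\alpha\cup\beta$ non-separating and with a prescribed geometric intersection pattern (in particular a prescribed intersection number); so the only thing left is to compare the values of $c$ across pairs with \emph{different} intersection patterns. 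This reduces the lemma to finitely many ``local'' comparisons, which can all be carried out inside a fixed small subsurface (a fact we will need, since $g\geq 3$ gives us room to embed the relevant model surfaces).

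Second, the crucial comparisons. Suppose $\alpha$ and $\beta$ are disjoint (and $\alpha\cup\beta$ non-separating). Then $\tau_\alpha$ and $\tau_\beta$ commute (Lemma~\ref{lem:5}), so by Lemma~\ref{lem:13}, $\tau_\beta s_\alpha = s_\alpha$, i.e.\ $s_\alpha\in V_\beta$; hence $p_\beta s_\alpha = s_\alpha$, and therefore $c_{\alpha\beta} = \inner{s_\alpha, s_\beta} = \inner{p_\beta s_\alpha, s_\beta} = \inner{s_\alpha, p_\beta s_\beta}$— but more usefully, $s_\alpha = p_\beta u(\tau_\beta)$ applied the other way gives a symmetric expression. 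The real content is a relation that lets one replace $\beta$ by a curve $\beta'$ with $\beta'$ disjoint from $\alpha$ but $\beta'$ obtained from $\beta$ by a twist. The key observation is: if $\phi\in\Gamma$ commutes with $\tau_\alpha$, then $c_{\alpha,\phi(\beta)} = \inner{s_\alpha, s_{\phi(\beta)}} = \inner{s_\alpha, \phi s_\beta} = \inner{\phi\inv s_\alpha, s_\beta} = \inner{s_\alpha, s_\beta} = c_{\alpha\beta}$, using Lemma~\ref{lem:13} in the form $\phi\inv s_\alpha = s_\alpha$. So $c_{\alpha\beta}$ is unchanged when $\beta$ is moved by any mapping class fixing $\alpha$ (i.e.\ commuting with $\tau_\alpha$, which in particular holds for twists on curves disjoint from $\alpha$). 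Symmetrically, $c_{\alpha\beta}$ is unchanged when $\alpha$ is moved by a mapping class commuting with $\tau_\beta$. Chaining these two kinds of moves, one can pass from any admissible pair to any other: given two admissible pairs, first move $\alpha$ (keeping $\beta$ fixed, via an element commuting with $\tau_\beta$) to the target first curve, then move $\beta$ (keeping the new $\alpha$ fixed) to the target second curve; the only thing to check is that each intermediate pair remains admissible, which one arranges by working inside a genus-$\geq 3$ model where enough room is available.

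Third, assemble: by combining $\Gamma$-invariance (Lemma~\ref{lem:9}) with the two families of ``fix one curve, move the other'' moves just described, any two admissible pairs $(\alpha,\beta)$ and $(\alpha',\beta')$ are connected, so $c_{\alpha\beta} = c_{\alpha'\beta'}$, proving the lemma. The main obstacle I anticipate is the second stage: verifying that the ``move one curve while fixing the other'' operations act transitively on the configuration space of admissible pairs, and in particular that one never gets stuck at a non-admissible intermediate configuration. This is where the genus hypothesis $g\geq 3$ is essential — it guarantees enough non-separating curves disjoint from any given curve, and enough room to realize the connecting mapping classes — and it is the one step that requires a careful (though standard) surface-topology argument rather than pure cocycle manipulation.
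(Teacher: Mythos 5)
Your first stage is, essentially verbatim, the paper's entire proof: Lemma~\ref{lem:9} plus unitarity gives $c_{\alpha\beta} = c_{\phi(\alpha)\phi(\beta)}$, and the change-of-coordinates principle supplies a $\phi\in\Gamma$ carrying one pair to the other. The point you worried about --- pairs with different intersection patterns --- does not arise, because the implicit convention here (consistent with how $c$ is used in Lemma~\ref{lem:14} and Theorem~\ref{thm:4}) is that a ``non-separating pair'' consists of two \emph{disjoint} non-isotopic non-separating curves whose union has connected complement. All such ordered pairs have the same topological configuration (cutting along $\alpha\cup\beta$ yields a connected genus $g-2$ surface with four boundary components), so the mapping class group acts transitively on them and stage one already finishes the proof.

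Your stages two and three, intended to bridge different intersection patterns, contain a genuine gap: every move in your toolkit preserves the geometric intersection number $i(\alpha,\beta)$. An element $\phi$ commuting with $\tau_\beta$ must fix the isotopy class of $\beta$ (since $\phi\tau_\beta\phi\inv = \tau_{\phi(\beta)}$ by Lemma~\ref{lem:4}), so replacing $\alpha$ by $\phi(\alpha)$ gives $i(\phi(\alpha),\beta) = i(\phi(\alpha),\phi(\beta)) = i(\alpha,\beta)$; the symmetric move and the global $\Gamma$-action likewise preserve $i(\alpha,\beta)$. Hence no chain of these moves can connect pairs with different intersection numbers, and the ``careful surface-topology argument'' you defer to cannot exist in the form you describe. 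If one really wanted the statement for intersecting pairs, one would need a different mechanism (in the end it is vacuous, since Theorem~\ref{thm:4} shows $s_\alpha = 0$ identically, but that is not available at this point in the paper). The fix is simply to delete stages two and three and note the disjointness convention.
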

\begin{proof}
  Let $(\alpha', \beta')$ be any other pair such that $\alpha'\cup
  \beta'$ does not separate $\Sigma$. Then, by the classification of
  surfaces, there is a diffemorphism $\phi\in\Gamma$ such that
  $\phi(\alpha) = \alpha'$ and $\phi(\beta) = \beta'$. Then by the
  naturality from Lemma~\ref{lem:9} we have
  \begin{equation*}
    \inner{s_{\alpha'}, s_{\beta'}} = \inner{s_{\phi(\alpha)},
      s_{\phi(\beta)}} = \inner{\phi s_\alpha, \phi s_\beta} =
    \inner{s_\alpha, s_\beta}
  \end{equation*}
  since $\phi$ acts unitarily.
\end{proof}

The vector $s_\alpha = p_\alpha u(\tau_\alpha) \in V$ obviously only
depends on the cohomology class $[u]\in H^1(\Gamma, V)$ of $u$. Hence,
we have essentially proved that there exists a well-defined map
$c\colon H^1(\Gamma, V)\to \setC$, whose value on $[u]$ is given by
picking any two jointly non-separating simple closed curves
$\alpha,\beta$ and computing the number $c([u]) = \inner{p_\alpha
  u(\tau_\alpha), p_\beta u(\tau_\beta)}$.
\begin{lemma}
  \label{lem:14}
  When $g\geq 3$, the map $c$ is identically $0$.
\end{lemma}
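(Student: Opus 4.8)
The plan is to show that $c([u]) = \inner{s_\alpha, s_\beta} = 0$ by exploiting the freedom, guaranteed by Lemma~\ref{lem:10}, to choose the jointly non-separating pair $(\alpha, \beta)$ as conveniently as possible, together with the rigidity statements of Lemmas~\ref{lem:9} and~\ref{lem:13}. The genus hypothesis $g \geq 3$ is what gives enough room inside $\Sigma$ to place the curves we want: concretely, I would look for a configuration of non-separating curves $\alpha, \beta, \gamma, \ldots$, pairwise jointly non-separating, such that some relation in the mapping class group (chain relation, lantern relation, braid relation, or simply commutation on disjoint curves) forces $s_\alpha$ and $s_\beta$ to be orthogonal, or forces $s_\alpha = 0$ outright.

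The cleanest route I expect is via the lantern relation (Lemma~\ref{lem:8}) or a chain-type relation, combined with Lemma~\ref{lem:13}. Here is the mechanism I have in mind. Suppose we can find, inside a subsurface of $\Sigma$ (which exists because $g \geq 3$), non-separating curves $\alpha$ and $\beta$ that are \emph{disjoint} and jointly non-separating, and a third curve $\gamma$ disjoint from $\alpha$ but with $\gamma$ carried by a mapping class that conjugates $\alpha$ to $\beta$. Since $\alpha$ and $\beta$ are disjoint, $\tau_\alpha$ and $\tau_\beta$ commute (Lemma~\ref{lem:5}), so by Lemma~\ref{lem:13} applied with $\phi = \tau_\beta$ we get $\tau_\beta s_\alpha = s_\alpha$, i.e. $s_\alpha \in V_\beta$, and similarly $s_\beta \in V_\alpha$. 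Now I would try to produce an element $\psi \in \Gamma$ that fixes $\alpha$ (so commutes with $\tau_\alpha$, hence fixes $s_\alpha$ by Lemma~\ref{lem:13}) but sends $\beta$ to some $\beta'$ disjoint from $\beta$ and still jointly non-separating with $\alpha$; then $s_{\beta'} = \psi s_\beta$ and $\inner{s_\alpha, s_{\beta'}} = \inner{\psi^{-1} s_\alpha, s_\beta} = \inner{s_\alpha, s_\beta} = c$, while by Lemma~\ref{lem:10} also $\inner{s_\alpha, s_{\beta'}} = c$ — so that alone gives nothing. Instead the orthogonality should come from a relation: arrange curves $\alpha, \beta, \gamma$ so that $\tau_\alpha \tau_\beta \tau_\gamma$ (or a lantern/chain product) equals something whose cocycle value, after projecting, expresses $\inner{s_\alpha, s_\beta}$ as a quantity that must also equal its own negative, or equals $c_{\alpha\gamma} + c_{\beta\gamma} - $ (itself), forcing $c = 0$.

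More precisely, the strategy I would pursue: apply the cocycle condition to a suitable word, project with the various $p$'s, and use Lemma~\ref{lem:13} to kill the "extra" terms $u(\phi)$ whenever $\phi$ commutes with the relevant twist. The relation that seems most promising is one producing, for jointly-non-separating \emph{and pairwise disjoint} triples where $\alpha \cup \beta \cup \gamma$ is still non-separating (possible for $g \geq 3$), an identity like $\tau_\alpha \tau_\beta = \tau_\gamma \tau_\delta$ with all four mutually disjoint — but in a surface Dehn twists on distinct curves are independent in homology, so such an identity cannot hold; hence I must instead use a braid relation where $\alpha, \beta$ meet once. Since $\alpha, \beta$ meeting once are \emph{not} jointly non-separating in a way compatible... — rather, $\alpha\cup\beta$ with $i(\alpha,\beta)=1$ bounds, so that case is excluded from $c_{\alpha\beta}$. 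So the honest plan is: take two jointly non-separating curves $\alpha$ and $\beta$ with $i(\alpha, \beta) = 0$ (disjoint). Then $s_\alpha \in V_\beta \cap V_\alpha^{\perp}$-type constraints hold. Find $\phi$ with $\phi(\alpha) = \beta$, $\phi(\beta) = \alpha$ (a "swap", realizable for $g \geq 3$ since the two complements are diffeomorphic), giving $s_\beta = \phi s_\alpha$, so $c = \inner{s_\alpha, \phi s_\alpha}$. Separately find $\phi'$ fixing the pair setwise but with $\phi'(\alpha) = \alpha$, $\phi'(\beta) = \beta$ and $\phi' = \phi^2$ on nothing — the point is to exhibit $\phi$ of finite order or with $\phi^2$ commuting with both twists, so $\phi^2 s_\alpha = s_\alpha$; combined with unitarity this still does not force orthogonality.

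Therefore the main obstacle — and the step I expect to be genuinely delicate — is identifying the correct mapping-class-group relation among non-separating curves whose $u$-image, after projection, rigidly pins $c$ to $0$; I anticipate it is the chain relation of Lemma~\ref{lem:7} realized in a genus-$2$ subsurface of $\Sigma$ (available precisely because $g \geq 3$ leaves a handle to spare so all curves stay non-separating), together with the braid relations, expressing $(\tau_\alpha\tau_\beta\tau_\gamma)^4$ in terms of boundary twists that are themselves trivial on the relevant fixed subspaces. I would set up the computation by repeatedly applying the cocycle identity to this word, each time using Lemma~\ref{lem:13} to discard terms $u(\psi)$ when $\psi$ commutes with whichever twist's projection is being applied, and Lemma~\ref{lem:9} to rewrite $s_{\psi(\alpha)}$ as $\psi s_\alpha$; the genus-$\geq 3$ assumption enters exactly to guarantee that every auxiliary curve produced along the way is non-separating and that every pair encountered is jointly non-separating, so that all the inner products appearing are the single number $c$ by Lemma~\ref{lem:10}. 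The resulting linear relation among copies of $c$ (with coefficients coming from the exponents in the chain relation) should be non-trivial and force $c = 0$.
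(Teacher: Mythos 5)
You correctly guess the key relation: the paper's proof does apply the cocycle condition to the chain relation $(\tau_\alpha\tau_\beta\tau_\gamma)^4 = \tau_\delta\tau_\epsilon$ embedded so that all five curves are non-separating, and does end with a linear relation among copies of $c$ (namely $12c = 2c$). But your plan has a genuine gap at exactly the step you flag as delicate: you never say \emph{against which vector} the twelve-plus-two terms of the expanded cocycle identity are to be paired, and this is the one idea that makes the computation work. The curves $\alpha,\beta,\gamma$ of the chain relation intersect each other (consecutive curves in the chain meet once), so Lemma~\ref{lem:13} does \emph{not} let you discard the group elements $\tau_\alpha^{\pm1},\tau_\beta^{\pm1},\dots$ acting on the $u$-values if you try to pair with $s_\beta$ or any other $s$ attached to a curve of the configuration; the terms $\inner{\tau_w u(\tau_\alpha), s_\beta}$ would not collapse to $c$.

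The paper's device is a \emph{sixth} curve $\eta$, chosen non-separating in the complement of the two-holed torus and hence disjoint from all five curves $\alpha,\beta,\gamma,\delta,\epsilon$, and jointly non-separating with each of them. Pairing the whole identity with $s_\eta$, every prefix $\tau_w$ fixes $s_\eta$ (Lemma~\ref{lem:13}), so by unitarity $\inner{\tau_w u(\tau_\alpha), s_\eta} = \inner{u(\tau_\alpha), s_\eta}$; then writing $s_\eta = p_\alpha s_\eta$ and moving the self-adjoint projection across gives $\inner{s_\alpha, s_\eta} = c$ for each term, whence $12c = 2c$. This is also where $g\geq 3$ actually enters: not to keep the chain-relation curves non-separating (genus $2$ already suffices for that), but to leave a handle in the complement of the two-holed torus in which $\eta$ can live. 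Without supplying $\eta$ (or an equivalent auxiliary vector fixed by all twists in the relation), your outline cannot be completed as stated.
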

\begin{proof}
  In any surface of genus at least $2$, one may embed the two-holed
  torus relation (Lemma~\ref{lem:7}) in such a way that $\gamma$ and
  $\delta$ are non-separating (the curves $\alpha,\beta,\gamma$
  occuring in the two-holed torus relation are always non-separating).
  If the genus of the surface is at least $3$, the complement of the
  two-holed torus is a surface of genus at least $1$. Hence, in that
  subsurface we may find a sixth non-separating curve $\eta$. Observe
  that $\eta$ makes a non-separating pair with each of the other five
  curves. See Figure~\ref{fig:sixcurves}.
  \begin{figure}[htb]
    \centering
    \includegraphics{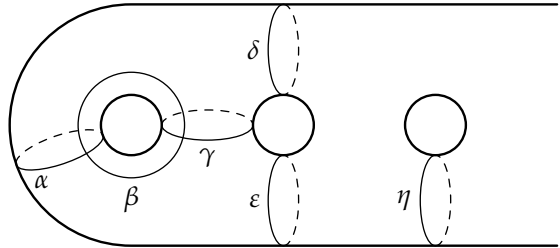}
    \caption{A two-holed torus embedded in a surface of genus $\geq 3$.}
    \label{fig:sixcurves}
  \end{figure}

  Applying $u$ and the cocycle condition repeatedly to the two-holed
  torus relation yields the equation
  \begin{equation}
    \label{eq:17}
    u(\tau_\alpha) + \tau_\alpha u(\tau_\beta) + \cdots =
    u(\tau_\delta) + \tau_\delta u(\tau_\epsilon).
  \end{equation}
  The dots on the left-hand side represent $10$ terms involving
  various actions of $\tau_\alpha,\tau_\beta,\tau_\gamma$ on the
  values of $u$ on these twists. Since each of the five curves is
  disjoint from $\eta$, we have $\tau_\alpha^{\pm1} s_\eta = s_\eta$,
  and similarly for $\beta,\gamma,\delta,\epsilon$. Now we take the
  inner product of \eqref{eq:17} with $s_\eta$ to obtain
  \begin{equation}
    \label{eq:18}
    4\inner{u(\tau_\alpha), s_\eta} + 4\inner{u(\tau_\beta), s_\eta} +
    4\inner{u(\tau_\gamma), s_\eta} = \inner{u(\tau_\delta), s_\eta} +
    \inner{u(\tau_\epsilon), s_\eta}
  \end{equation}
  using the fact that $\inner{\phi x, y} = \inner{x, \phi\inv y}$.
  But since $\tau_\alpha s_\eta = s_\eta$, we also have $p_\alpha
  s_\eta = s_\eta$, and since the projection $p_\alpha$ is
  self-adjoint, the first term in \eqref{eq:18} is equal to
  $4\inner{s_\alpha, s_\eta} = 4c$. Similar remarks apply to the other
  terms, so \eqref{eq:18} reduces to $12c = 2c$, so $c = 0$.
\end{proof}

Now we are ready to prove the main result of this section.
\begin{proof}[Theorem~\ref{thm:4}]
  We first treat the case where $\alpha$ is non-separating.  We cannot
  simply put $\alpha = \beta$ in the computation of $c$, since
  $(\alpha,\alpha)$ is not a non-separating pair. But when the surface
  has genus at least $3$, we may embed the lantern relation
  (Lemma~\ref{lem:8}) in such a way that all seven curves are
  non-separating. Furthermore, it can be done in such a way that
  $\gamma_0$ makes a non-separating pair with each of the other six
  curves. On Figure~\ref{fig:lantern-non-sep} this is shown for a
  genus~$3$ surface; note that the shown surface has been cut along
  $\gamma_0$. The right-hand part of the cut surface (a sphere with
  four holes) could be replaced by a surface with any genus and four
  boundary components. Now the cocycle condition applied to the
  lantern relation gives
  \begin{equation}
    \label{eq:19}
    u(\tau_0) + \tau_0 u(\tau_1\tau_2\tau_3) = u(\tau_{12}\tau_{13}\tau_{23}).
  \end{equation}
  Finally, taking the inner product with $s_{\gamma_0}$ on both sides
  and applying computations similar to those above, we get
  $\inner{s_{\gamma_0}, s_{\gamma_0}} = \inner{u(\tau_0), s_{\gamma_0}}
  = 0$. Hence $s_{\gamma_0} = 0$, and by naturality
  (Lemma~\ref{lem:9}) this holds for any non-separating curve.

  \begin{figure}[htb]
    \centering
    \includegraphics{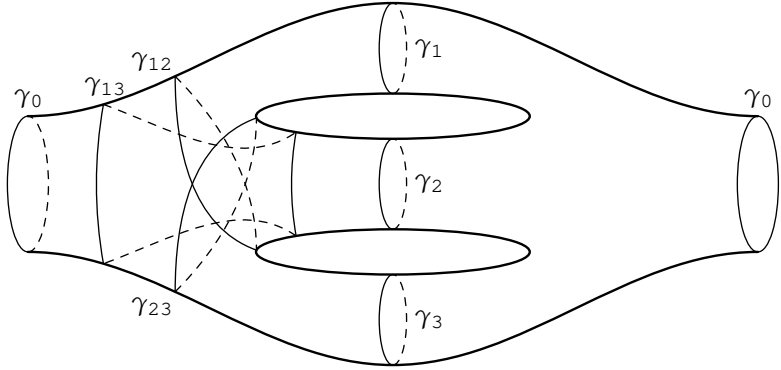}
    \caption{An embedding of the lantern relation such that all seven
      curves are non-separating. The $\gamma_0$ on the left is
      identified with that on the right.}
    \label{fig:lantern-non-sep}
  \end{figure}

  If $\alpha$ is separating, we use the fact that one of the sides of
  $\alpha$ has genus at least $2$ and Corollary~\ref{cl:4} to write
  $\tau_\alpha$ as a product of twists in six non-separating curves.
  For some appropriate choice of signs $\epsilon_j$, we thus have $\tau_\alpha =
  \prod_{j=1}^6 \tau_j^{\epsilon_j}$,
  where the $\tau_j$ are the twists in the appropriate non-separating
  curves disjoint from $\alpha$. Now apply the cocycle condition and
  take the inner product with $s_\alpha$ to obtain
  \begin{equation*}
    \inner{s_\alpha,s_\alpha} = \inner{u(\tau_\alpha), s_\alpha} =
    \inner{u(\tau_1^{\epsilon_1}), s_\alpha} + \cdots +
    \inner{ \tau_1^{\epsilon_1}\tau_2^{\epsilon_2}\tau_3^{\epsilon_3}
      \tau_4^{\epsilon_4}\tau_5^{\epsilon_5} u(\tau_6^{\epsilon_6}), s_\alpha}.
  \end{equation*}
  By Lemma~\ref{lem:13}, $\tau_j^{\pm1}s_\alpha = s_\alpha$, so using
  the unitarity of the action this reduces to
  \begin{equation*}
    \inner{s_\alpha,s_\alpha} = \sum_{j=1}^6 \inner{u(\tau_j^{\epsilon_j}), s_\alpha}
  \end{equation*}
  Finally, we conclude that each term on the right-hand side vanishes
  by writing $s_\alpha$ as $\smash{p_j s_\alpha}$, moving the
  self-adjoint projection $p_j$ to $\smash{u(\tau_j^{\epsilon_j})}$
  and using that $s_\beta = 0$ for non-separating curves $\beta$.
\end{proof}

\subsection{Property (T) and Property (FH)}
\label{sec:property-t}

Two properties of topological groups, known as \propT and \propFH,
respectively, are intimately related to the cohomology of groups with
coefficients in real or complex Hilbert spaces. A thorough exposition
of these properties and their relationship to group cohomology is far
beyond the scope of this paper. We instead refer the interested reader
to the very comprehensive book~\cite{MR2415834}. In this short section
we will simply outline the facts we need.

\begin{proposition}
  \label{prop:3}
  For $g\geq 2$, the discrete group $\Sp(2g, \setZ)$ has \propT.
\end{proposition}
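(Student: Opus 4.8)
The plan is to deduce Proposition~\ref{prop:3} from two classical facts: Kazhdan's theorem that higher-rank simple Lie groups have Property~(T), and the stability of Property~(T) under passage to lattices. Both are treated in full detail in the book~\cite{MR2415834}, so the argument will in the end be a short chain of citations, and I would present it as such rather than reproving anything.

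First I would recall that $\Sp(2g,\setR)$ is a connected real Lie group with finite center and simple Lie algebra, whose real rank is exactly $g$: a maximal split torus is the group of diagonal matrices $\operatorname{diag}(t_1,\dots,t_g,t_1\inv,\dots,t_g\inv)$. Consequently, for $g\geq 2$ the real rank of $\Sp(2g,\setR)$ is at least $2$, and Kazhdan's theorem (see~\cite{MR2415834}) shows that $\Sp(2g,\setR)$ has Property~(T). This is precisely the point at which the hypothesis $g\geq 2$ is used: $\Sp(2,\setR)=\SL(2,\setR)$ has real rank $1$ and does \emph{not} have Property~(T), so the statement genuinely fails for $g=1$.

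Next I would invoke the fact that $\Sp(2g,\setZ)$ is a lattice in $\Sp(2g,\setR)$, i.e.\ a discrete subgroup of finite covolume. This is classical: $\Sp_{2g}$ is a semisimple algebraic group defined over $\setQ$ with no nontrivial $\setQ$-characters, so the theorem of Borel and Harish-Chandra on arithmetic groups applies (it is also proved directly in~\cite{MR2415834}); note that this lattice is non-uniform, but that is irrelevant for what follows. Finally, Property~(T) is inherited by lattices in locally compact groups (again~\cite{MR2415834}): if $G$ has Property~(T) and $\Lambda\leq G$ is a lattice, then $\Lambda$ has Property~(T). Applying this with $G=\Sp(2g,\setR)$ and $\Lambda=\Sp(2g,\setZ)$ gives the proposition. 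There is no substantive obstacle here; the only place calling for any care is verifying the rank computation, so that the higher-rank hypothesis of Kazhdan's theorem is genuinely satisfied for every $g\geq 2$.
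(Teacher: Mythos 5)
Your argument is correct and follows exactly the same route as the paper's own proof: $\Sp(2g,\setR)$ has Property~(T) (the paper cites Theorem~1.5.3 of \cite{MR2415834}, which is precisely Kazhdan's higher-rank criterion you invoke via the rank-$g$ computation), Property~(T) descends to lattices (Theorem~1.7.1 there), and $\Sp(2g,\setZ)$ is a lattice in $\Sp(2g,\setR)$. The extra detail you supply on the real rank and on Borel--Harish-Chandra is a welcome justification of the $g\geq 2$ hypothesis but does not change the structure of the argument.
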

\begin{proof}
  By Theorem~1.5.3 of \cite{MR2415834}, the locally compact group
  $\Sp(2g, \setR)$ has \propT, and by Theorem~1.7.1, \propT is
  inherited by lattices in locally compact groups. Finally, $\Sp(2g,
  \setZ)$ is known to be a lattice in $\Sp(2g, \setR)$.
\end{proof}

For finitely generated groups, a number of conditions are known to be
equivalent to \propT. The following is quoted from
\cite{MR2415834}, Theorem~3.2.1. 
\begin{theorem}
  \label{thm:3}
  Let $G$ be a locally compact group which is second countable and
  compactly generated. The following conditions are equivalent:
  \begin{enumerate}[\upshape(i)]
  \item $G$ has \propT;
  \item $H^1(G, \pi) = 0$ for every \emph{irreducible} unitary representation $\pi$ of $G$;
  \item $\overline{H^1}(G, \pi) = 0$ for every irreducible unitary
    representation $\pi$ of $G$;
  \item $\overline{H^1}(G, \pi) = 0$ for every unitary representation
    $\pi$ of $G$.
  \end{enumerate}
\end{theorem}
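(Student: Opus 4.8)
This is a deep and classical result --- essentially the Delorme--Guichardet theorem together with two refinements --- and I only indicate the structure of the argument, referring to \cite[Chapter~2 and Section~3.2]{MR2415834} for the details. The plan is to run the cycle of implications $\mathrm{(i)}\Rightarrow\mathrm{(ii)}\Rightarrow\mathrm{(iii)}\Rightarrow\mathrm{(iv)}\Rightarrow\mathrm{(i)}$.

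For $\mathrm{(i)}\Rightarrow\mathrm{(ii)}$ I would use Delorme's theorem that \propT implies \propFH, i.e.\ that every continuous affine isometric action of $G$ on a real Hilbert space has a fixed point. Concretely, given $\pi$ on $\mathcal H$, one splits off the subspace $\mathcal H^G$ of invariant vectors: on $\mathcal H^G$ a cocycle is just a continuous homomorphism into a vector space, which vanishes because a group with \propT has compact abelianisation; on the complement $\mathcal H'$, which has no invariant vectors, one considers a cocycle $b$, forms the conditionally negative definite function $g\mapsto\lVert b(g)\rVert^2$, and uses the Schoenberg construction together with \propT to conclude that this function is bounded, hence that the affine orbit is bounded and $b$ is a coboundary. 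Thus $H^1(G,\pi)=0$ for every unitary $\pi$, in particular for $\pi$ irreducible. The implication $\mathrm{(ii)}\Rightarrow\mathrm{(iii)}$ is immediate, since $\overline{H^1}$ is a quotient of $H^1$.

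For $\mathrm{(iii)}\Rightarrow\mathrm{(iv)}$ one invokes second countability to decompose an arbitrary unitary representation of $G$ as a direct integral of irreducible representations and shows that vanishing of the reduced cohomology of almost every fibre propagates to the whole representation; this is the most delicate passage, since reduced cohomology does not literally commute with direct integrals and the argument requires careful control of the Hilbert-space-valued cocycles over the base. Finally $\mathrm{(iv)}\Rightarrow\mathrm{(i)}$ is Guichardet's half: if $G$ fails \propT then, using compact generation, there is a unitary representation with almost invariant but no nonzero invariant vectors, and from a sequence of almost invariant unit vectors one manufactures an unbounded conditionally negative definite function on $G$, hence an affine isometric action with unbounded orbits, hence a nonzero class in $\overline{H^1}(G,\pi)$ for a suitable $\pi$. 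The two places where the real work lies are precisely this last implication --- reversing ``no invariant vectors implies no almost invariant vectors'', which is essentially the definition of \propT, is the subtle direction --- and the direct integral argument in $\mathrm{(iii)}\Rightarrow\mathrm{(iv)}$. Since we need the statement only as a black box, we simply cite \cite[Theorem~3.2.1]{MR2415834}.
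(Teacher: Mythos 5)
The paper offers no proof of this theorem at all: it is explicitly quoted as Theorem~3.2.1 of \cite{MR2415834}, exactly as you do in your final sentence. Your sketch of the Delorme--Guichardet cycle of implications is a reasonable summary of the standard argument in that reference, and since you too ultimately treat the statement as a black box citation, your approach matches the paper's.
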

In fact, one can add a fifth element to the list.
\begin{lemma}
  \label{lem:15}
  Let $G$ be a group satisfying the conditions of Theorem~\ref{thm:3}.
  Then conditions {\upshape(i)--(iv)} are also equivalent to
  \begin{enumerate}[\upshape(i)]
    \setcounter{enumi}{4}
  \item $H^1(G, \pi) = 0$ for every unitary representation $\pi$ of $G$.
  \end{enumerate}
\end{lemma}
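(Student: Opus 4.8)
The plan is to establish the two implications (v)$\Rightarrow$(iv) and (iv)$\Rightarrow$(v); since conditions (i)--(iv) are already mutually equivalent by Theorem~\ref{thm:3}, this suffices to append (v) to the list. The direction (v)$\Rightarrow$(iv) is immediate: for any unitary representation $\pi$ on a Hilbert space $V$, the reduced cohomology $\overline{H^1}(G,\pi) = Z^1(G,\pi)/\overline{B^1(G,\pi)}$ is a quotient of $H^1(G,\pi) = Z^1(G,\pi)/B^1(G,\pi)$, so if the latter vanishes, so does the former. Everything therefore rests on the converse, and here the key observation is that it is enough to show that $B^1(G,\pi)$ is \emph{closed} in $Z^1(G,\pi)$ for every unitary representation of a group satisfying the conditions (in particular, having \propT). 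Indeed, once $B^1(G,\pi)$ is closed, the hypothesis $\overline{H^1}(G,\pi)=0$, which says $Z^1(G,\pi) = \overline{B^1(G,\pi)}$, forces $Z^1(G,\pi) = B^1(G,\pi)$, i.e.\ $H^1(G,\pi) = 0$.

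To prove $B^1(G,\pi)$ closed, first reduce to representations without nonzero invariant vectors. The subspace $V^G$ of $G$-fixed vectors is closed and $G$-invariant, so by unitarity there is a $G$-invariant orthogonal decomposition $V = V^G \oplus W$ with $W = (V^G)^\perp$. This splits $Z^1(G,\pi) \cong Z^1(G,\pi|_{V^G}) \oplus Z^1(G,\pi|_W)$ and similarly for $B^1$, compatibly with the topology of uniform convergence on compact subsets. On $V^G$ the action is trivial, so $B^1(G,\pi|_{V^G}) = \{0\}$, which is closed; hence it suffices to treat $\pi|_W$, and we may assume $V^G = 0$.

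Now \propT enters. Fix a Kazhdan pair $(Q,\epsilon)$ for $G$ with $Q\subseteq G$ compact. Since $\pi$ has no nonzero invariant vectors, it has no $(Q,\epsilon)$-almost invariant vectors, i.e.\ $\sup_{g\in Q}\lVert\pi(g)v - v\rVert \geq \epsilon\lVert v\rVert$ for all $v\in V$. Consider the coboundary map $\partial\colon V\to Z^1(G,\pi)$ given by $(\partial v)(g) = (1-\pi(g))v$; it is continuous, linear, with image exactly $B^1(G,\pi)$, and it is injective because $\partial v = 0$ forces $v\in V^G = 0$. The displayed estimate reads $\sup_{g\in Q}\lVert(\partial v)(g)\rVert \geq \epsilon\lVert v\rVert$, and the left-hand side is a continuous seminorm on $Z^1(G,\pi)$ because $Q$ is compact. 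Since $G$ is second countable and compactly generated, hence $\sigma$-compact, $Z^1(G,\pi)$ with the compact-convergence topology is a Fréchet space; a continuous linear map out of the complete space $V$ which is bounded below with respect to one of the defining seminorms is a topological embedding onto a complete, hence closed, subspace. Thus $B^1(G,\pi) = \partial(V)$ is closed, completing the argument.

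The main obstacle is precisely this last step: converting \propT into the lower bound $\sup_{g\in Q}\lVert(\partial v)(g)\rVert \geq \epsilon\lVert v\rVert$ and then applying the functional-analytic fact that a bounded-below continuous linear image of a complete space is closed. Conceptually, the whole lemma repackages the Delorme--Guichardet equivalence of \propT and \propFH: a cocycle $u$ yields an affine isometric action $g\mapsto(\xi\mapsto\pi(g)\xi + u(g))$, and a fixed point of this action is exactly a vector whose coboundary equals $u$. The argument above simply isolates the one extra ingredient --- closedness of $B^1$ --- needed to upgrade the statement from $\overline{H^1}$ to $H^1$.
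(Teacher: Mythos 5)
Your argument is correct, but it takes a genuinely different route from the paper's. The paper disposes of the converse direction in two lines by invoking the Delorme--Guichardet theorem: for the groups under consideration, \propT is equivalent to \propFH, and \propFH is precisely the vanishing of $H^1(G,\pi)$ for every orthogonal representation, hence in particular for every unitary one. You instead re-derive the substance of the Guichardet half of that equivalence: you extract a Kazhdan pair $(Q,\epsilon)$, split off the invariant vectors via the $G$-invariant decomposition $V=V^G\oplus W$ (legitimate, since both summands are invariant and $B^1$ vanishes on the trivial part), and use the lower bound $\sup_{g\in Q}\lVert(1-\pi(g))v\rVert\geq\epsilon\lVert v\rVert$ on $W$ to show the coboundary map is bounded below in one of the defining seminorms, hence has closed image, so that $\overline{H^1}=H^1$. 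The Cauchy-sequence argument behind ``bounded below $\Rightarrow$ closed image'' is sound; in fact you only need completeness of the Hilbert space $W$ and the Hausdorff property of $Z^1$, so the Fr\'echet claim is more machinery than required. What your version buys is a self-contained proof that isolates exactly the ingredient --- closedness of $B^1(G,\pi)$ --- separating condition (iv) from condition (v); what the paper's version buys is brevity, at the cost of citing Theorem~2.12.4 of \cite{MR2415834}. The closedness of $B^1$ in the absence of almost invariant vectors is itself a standard lemma in that same reference, so the two proofs ultimately rest on the same mechanism.
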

\begin{proof}
  Clearly (v) implies (ii) and hence the other conditions. By the
  Delorme-Guichardet Theorem (Theorem~2.12.4 in \cite{MR2415834}),
  \propT and \propFH are equivalent for the class of groups
  considered, so \propT implies that $H^1(G, \pi) = 0$ for any
  orthogonal representation $\pi$. Any unitary representation is in
  particular an orthogonal representation, so $H^1(G, \pi) = 0$ for
  any unitary representation as well.
\end{proof}

\begin{corollary}
  \label{cl:3}
  For any unitary representation $\pi\colon \Sp(2g, \setZ)\to \U(V)$,
  the cohomology group $H^1(\Sp(2g, \setZ), V)$ vanishes.
\end{corollary}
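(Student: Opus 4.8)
The plan is to deduce this directly from the property-(T) machinery assembled above, with essentially no new work. First I would observe that $\Sp(2g,\setZ)$, being a discrete group, satisfies the hypotheses of Theorem~\ref{thm:3}: as a topological group it is second countable precisely because it is countable, and it is ``compactly generated'' in the required sense because it is finitely generated --- this last fact is classical, $\Sp(2g,\setZ)$ being generated by the standard elementary symplectic transvections (alternatively, finite generation follows from being a lattice in the compactly generated group $\Sp(2g,\setR)$). Thus Theorem~\ref{thm:3}, and with it Lemma~\ref{lem:15}, applies to $G = \Sp(2g,\setZ)$.

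Next I would invoke Proposition~\ref{prop:3}: for $g\geq 2$ (which we assume throughout, since $g\geq 3$ in this paper) the group $\Sp(2g,\setZ)$ has \propT, which is condition~(i) in the list of Theorem~\ref{thm:3}. By Lemma~\ref{lem:15} the conditions (i)--(v) are all equivalent, so in particular condition~(v) holds: $H^1(\Sp(2g,\setZ),\pi) = 0$ for \emph{every} unitary representation $\pi$ of $\Sp(2g,\setZ)$. Applying this to the given representation $\pi\colon \Sp(2g,\setZ)\to\U(V)$ yields $H^1(\Sp(2g,\setZ),V) = 0$, as desired.

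There is no genuine obstacle here: all of the analytic content --- that $\Sp(2g,\setR)$ has \propT, that the property is inherited by lattices, the Delorme--Guichardet equivalence of \propFH and \propT, and the upgrade from reduced to ordinary $H^1$ --- has already been quoted or established in Proposition~\ref{prop:3} and Lemma~\ref{lem:15}. The only points requiring a moment's care are the translation of ``second countable'' and ``compactly generated'' into ``countable'' and ``finitely generated'' for a discrete group, and recording that the hypothesis $g\geq 2$ needed for Proposition~\ref{prop:3} is in force.
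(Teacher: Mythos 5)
Your proposal is correct and is exactly the argument the paper intends: the corollary is stated without proof precisely because it follows immediately from Proposition~\ref{prop:3} (Property~(T) for $\Sp(2g,\setZ)$, $g\geq 2$) combined with condition~(v) of Lemma~\ref{lem:15}. Your added check that a countable, finitely generated discrete group satisfies the hypotheses of Theorem~\ref{thm:3} is a sensible bit of diligence the paper leaves implicit.
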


\section{Functions on the abelian moduli space}
\label{sec:funct-abel-moduli}

From now on, we let $M = \Hom(\pi_1(\Sigma), \Uone) =
\Hom(H_1(\Sigma), \Uone)$ denote the modulo space of flat $\Uone$
connections on $\Sigma$. The mapping class group acts on $M$ by
$(\phi\cdot \rho)(m) = \rho(\phi\inv m)$ for $\phi\in \Gamma$,
$\rho\in M$ and $m\in H_1(\Sigma)$. This action is smooth and
preserves the measure on $M$, so there are induced actions on
$C^\infty(M)$ and $L^2(M)$ given by $(\phi\cdot f)(\rho) = f(\phi\inv
\rho)$ for smooth or square integrable functions $f$.

Let $\setC$ denote the space of constant functions on
$M$. Then there are splittings of $\Gamma$-modules
\begin{align*}
  C^\infty(M) &\cong C^\infty_0(M) \oplus \setC \\
  L^2(M) &\cong L^2_0(M) \oplus \setC,
\end{align*}
where $C^\infty_0(M)$ and $L^2_0(M)$ denotes the
space of smooth, respectively square integrable, functions with mean
value $0$. The action of $\Gamma$ on $\setC$ is obviously trivial, so
$H^1(\Gamma, \setC) = \Hom(\Gamma, \setC)$, but since the
abelianization of $\Gamma$ is known to be trivial for $g\geq 3$, the
latter is trivial. This yields the isomorphisms
\begin{align*}
  H^1(\Gamma, C^\infty(M)) &\cong H^1(\Gamma,
  C^\infty_0(M)),\\
  H^1(\Gamma, L^2(M)) &\cong H^1(\Gamma, L^2_0(M)).
\end{align*}

\subsection{Pure phase functions}
\label{sec:pure-phase-functions}

Topologically, $M$ is simply a $2g$-dimensional torus. There
is a natural orthonormal basis for $L^2(M)$ parametrized by
$H_1(\Sigma)$, which can be described in several different ways.

The intrinsic definition is rather simple. To a homology element $m\in
H_1(\Sigma)$, we associate the function $\widetilde m$ on
$M$ given by evaluation in $m$, ie. we put
\begin{equation*}
  \widetilde m(\rho) = \rho(m) \in \Uone \subset \setC
\end{equation*}
for $\rho\in M = \Hom(H_1(\Sigma), \Uone)$.

A choice of basis $(x_1, y_1, \ldots, x_g, y_g)$ for $H_1(\Sigma)$
induces a diffeomorphism $M \cong \Uone^{2g}$ given by
\begin{equation*}
  \rho\mapsto (\rho(x_1), \rho(y_1), \ldots, \rho(x_g), \rho(y_g)).
\end{equation*}
Under this identification, the function corresponding to the homology
element $m = a_1x_1 + b_1y_1 + \cdots a_gx_g + b_gy_g$ is simply the
trigonometric monomial
\begin{equation*}
  (z_1, w_1, \ldots, z_g, w_g) \mapsto z_1^{a_1} w_1^{b_1}\cdots z_g^{a_g}w_g^{b_g}
\end{equation*}
on $\Uone^{2g}$. From this description it is clear that the family
$\{\widetilde m \mid m\in H_1(\Sigma)\}$ constitutes an orthonormal
basis for $L^2(M)$.

For any (discrete) set $S$, we use $\ltwo(S)$ to denote the set of
square summable function $S\to\setC$, that is, the set $\{f\colon S\to
\setC \mid \sum_{s\in S} \abs{f(s)}^2 < \infty\}$. We will write such
a function as a formal linear combination $\sum_{s\in S} f_s s$.

\begin{lemma}
  \label{lem:2}
  There is a mapping class group equivariant isomorphism
  \begin{equation}
    \label{eq:6}
    L^2(M) \cong \ltwo(H_1(\Sigma))
  \end{equation}
  where $H_1(\Sigma)$ is considered as a discrete set.
\end{lemma}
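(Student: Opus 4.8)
The plan is to exhibit the isomorphism in \eqref{eq:6} explicitly using the orthonormal basis $\{\widetilde m\mid m\in H_1(\Sigma)\}$ constructed just above, and then to check that it intertwines the two $\Gamma$-actions. Concretely, I would define a linear map $\Phi\colon \ltwo(H_1(\Sigma))\to L^2(M)$ by sending a formal sum $\sum_{m} f_m\, m$ to the function $\sum_m f_m\,\widetilde m$, where the right-hand side is interpreted as an $L^2$-limit of the partial sums. The first step is to verify that $\Phi$ is a well-defined isometry: since $\{\widetilde m\}$ is an orthonormal basis of $L^2(M)$ (already established), Parseval/Bessel gives $\bigl\lVert \sum_m f_m\widetilde m\bigr\rVert_{L^2}^2 = \sum_m \abs{f_m}^2$, so the series converges in $L^2$ exactly when $(f_m)\in\ltwo(H_1(\Sigma))$, and $\Phi$ preserves norms. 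Completeness of the orthonormal system shows $\Phi$ is onto, hence a Hilbert space isomorphism.

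The substantive step is equivariance. Recall the $\Gamma$-action on $M$ is $(\phi\cdot\rho)(m)=\rho(\phi\inv m)$, and the induced action on functions is $(\phi\cdot f)(\rho)=f(\phi\inv\rho)$. I would compute the action on a basis element directly:
\begin{equation*}
  (\phi\cdot\widetilde m)(\rho) = \widetilde m(\phi\inv\rho) = (\phi\inv\rho)(m) = \rho\bigl((\phi\inv)\inv m\bigr) = \rho(\phi m) = \widetilde{\phi m}(\rho),
\end{equation*}
so $\phi\cdot\widetilde m = \widetilde{\phi m}$. Thus $\Gamma$ simply permutes the basis $\{\widetilde m\}$ according to the (linear, symplectic) action of $\Gamma$ on $H_1(\Sigma)$. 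On the other side, the natural $\Gamma$-action on $\ltwo(H_1(\Sigma))$ is the permutation action coming from the action on the index set $H_1(\Sigma)$, i.e. $\phi\cdot\bigl(\sum_m f_m\, m\bigr) = \sum_m f_m\,(\phi m)$. By construction $\Phi$ carries one to the other on basis elements, and since both actions are bounded (unitary) and linear, $\Phi$ intertwines them on all of $\ltwo(H_1(\Sigma))$ by continuity.

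The only mild subtlety — and the one point I would be careful about — is the bookkeeping of inverses: one must confirm that the pushforward-of-functions convention $(\phi\cdot f)(\rho)=f(\phi\inv\rho)$ together with the dual action $(\phi\cdot\rho)(m)=\rho(\phi\inv m)$ really yields $\phi\cdot\widetilde m=\widetilde{\phi m}$ (rather than $\widetilde{\phi\inv m}$), so that the action on the index set $H_1(\Sigma)$ is the honest left action of $\Gamma$ and not its opposite; the computation displayed above settles this. No genuine obstacle is expected here — the lemma is essentially a repackaging of the fact, already proved in the preceding paragraphs, that the trigonometric monomials form an orthonormal basis permuted by $\Gamma$ — so the proof is short. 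I would close by remarking that under this identification the mapping class group acts on $\ltwo(H_1(\Sigma))$ precisely by the permutation representation induced from the linear $\Gamma$-action on $H_1(\Sigma)$, which is the form of the action that will be used in the sequel.
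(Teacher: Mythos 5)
Your proposal is correct and follows essentially the same route as the paper: the paper's proof consists precisely of the computation $(\phi\cdot\widetilde m)(\rho)=\widetilde m(\phi\inv\rho)=(\phi\inv\rho)(m)=\rho(\phi m)=\widetilde{\phi m}(\rho)$, with the isometry part left implicit since the $\widetilde m$ were already shown to form an orthonormal basis. Your additional Parseval bookkeeping and the remark about the direction of the induced action are fine but not needed beyond what the paper records.
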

\begin{proof}
  We compute
  \begin{equation*}
    (\phi\cdot \widetilde m)(\rho) = \widetilde m(\phi\inv \cdot \rho)
    = (\phi\inv\cdot\rho)(m) = \rho(\phi \cdot m) =
    \widetilde{\phi\cdot m}(\rho),
  \end{equation*}
  proving the equivariance claim.
\end{proof}

Since the element $0\in H_1(\Sigma)$ clearly corresponds to the
constant function $1$ on $M$, we immediately obtain
\begin{lemma}
  \label{lem:16}
  Put $H' = H_1(\Sigma) - \{0\}$, considered as a discrete set. Then
  there is a mapping class group equivariant isomorphism
  \begin{equation}
    \label{eq:7}
    L^2_0(M) \cong \ltwo(H').
  \end{equation}
\end{lemma}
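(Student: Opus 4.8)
The plan is to deduce this directly from Lemma~\ref{lem:2} by identifying the subspace of constants on both sides. First I would recall that under the isomorphism $L^2(M) \cong \ltwo(H_1(\Sigma))$ of Lemma~\ref{lem:2}, the basis function $\widetilde m$ corresponds to the basis vector indexed by $m$. In particular the zero homology class $0\in H_1(\Sigma)$ corresponds to the function $\widetilde 0$ given by $\widetilde 0(\rho) = \rho(0) = 1$, i.e.\ the constant function $1$ on $M$. Hence the one-dimensional subspace $\setC\subset L^2(M)$ of constant functions is carried onto the line $\setC\cdot 0 \subset \ltwo(H_1(\Sigma))$ spanned by the single basis vector indexed by $0$.

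Next I would observe that this line is $\Gamma$-invariant: the mapping class group acts on $H_1(\Sigma)$ by linear (symplectic) automorphisms, which necessarily fix $0$, so $\phi\cdot 0 = 0$ for every $\phi\in\Gamma$ and the inclusion $\setC\cdot 0 \hookrightarrow \ltwo(H_1(\Sigma))$ is a map of $\Gamma$-modules. Its orthogonal complement is precisely $\ltwo(H')$, where $H' = H_1(\Sigma)\setminus\{0\}$, and since $\Gamma$ acts unitarily this orthogonal complement is again $\Gamma$-invariant. Pulling back through the isomorphism of Lemma~\ref{lem:2}, the orthogonal complement of $\setC$ in $L^2(M)$ — which is exactly $L^2_0(M)$, since the mean value of $f$ is its inner product with the constant function $1$ — is carried $\Gamma$-equivariantly and isometrically onto $\ltwo(H')$, giving the desired isomorphism.

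Finally, since the restriction of a unitary $\Gamma$-equivariant isomorphism to a $\Gamma$-invariant closed subspace is again a unitary $\Gamma$-equivariant isomorphism onto its image, no further verification is needed. There is no real obstacle here; the only point requiring a moment's thought is the bookkeeping identification ``mean value $0$'' $\Leftrightarrow$ ``coefficient of the basis vector indexed by $0$ vanishes,'' which is immediate from orthonormality of the basis $\{\widetilde m\}$ together with $\widetilde 0 \equiv 1$.
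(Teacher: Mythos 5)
Your argument is correct and is exactly the paper's (the paper simply notes that $0\in H_1(\Sigma)$ corresponds to the constant function $1$ and declares the lemma immediate); you have merely spelled out the orthogonal-complement bookkeeping that the paper leaves implicit. No issues.
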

It is very convenient that the action of the mapping class group can
be described by a permutation action on an orthonormal basis.

\subsection{Smooth functions}
\label{sec:smooth-functions}

Now we know that elements of $L^2_0(M)$ can be thought of as
formal linear combinations $\sum_{m\in H'} c_m m$ with $\sum_{m\in H'}
\abs{c_m}^2 < \infty$. We will also need to know under which
conditions a collection of coefficients $(c_m)$ defines a smooth
function. Choose a basis for $H_1(\Sigma)$, and let $\abs{m}$ denote
the norm of a homology element as defined by \eqref{eq:10}.
\begin{proposition}
  \label{prop:2}
  The formal sum $\sum_{m\in H_1(\Sigma)} f_m m$ defines a smooth
  function on $M$ if and only if $\abs{f_m}$ approaches $0$
  faster than any polynomial in $\abs{m}\inv$, or equivalently, if and
  only if for each $k\in\setN$, there is a constant $F_k$ such that
  \begin{equation}
    \label{eq:25}
    \abs{m}^k \abs{f_m} \leq F_k
  \end{equation}
  for all $m\in H_1(\Sigma)$.
\end{proposition}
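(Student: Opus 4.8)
The plan is to translate the analytic statement about smoothness on the torus $M \cong \U(1)^{2g}$ into the classical characterization of smooth functions on a torus in terms of rapid decay of Fourier coefficients. Indeed, under the diffeomorphism $M \cong \U(1)^{2g}$ induced by a choice of symplectic basis, the orthonormal basis $\{\widetilde m\}$ becomes precisely the family of trigonometric monomials $(z_1,w_1,\ldots,z_g,w_g)\mapsto z_1^{a_1}w_1^{b_1}\cdots z_g^{a_g}w_g^{b_g}$, so the formal sum $\sum_m f_m m$ is nothing but the Fourier series $\sum_{(a,b)\in\setZ^{2g}} f_{(a,b)}\, z^a w^b$ on the standard $2g$-torus $(\setR/2\pi\setZ)^{2g}$. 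Hence the proposition is a restatement of the standard fact that a distribution on the torus is a smooth function if and only if its Fourier coefficients decay faster than any polynomial.

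The steps I would carry out are as follows. First, pass to the real coordinates: write $z_j = e^{i\theta_j}$, $w_j = e^{i\phi_j}$, so that a function $f$ on $M$ corresponds to a $2\pi$-periodic function $F(\theta_1,\phi_1,\ldots,\theta_g,\phi_g)$ on $\setR^{2g}$, and $f_m$ is the usual Fourier coefficient $\widehat F(m) = (2\pi)^{-2g}\int F(x)\, e^{-i\langle m,x\rangle}\,dx$ where $\langle m,x\rangle = \sum_j (a_j\theta_j + b_j\phi_j)$. Second, for the "only if'' direction: assuming $F\in C^\infty$, integrate by parts. For any multi-index $\partial^\alpha$ with $|\alpha| = k$ (distributed among the $2g$ variables so as to extract powers of the nonzero coordinates of $m$) one gets $|\widehat{\partial^\alpha F}(m)| = |(\text{monomial in } a_j,b_j \text{ of degree }k)|\cdot|f_m|$, and the left side is bounded by $\|\partial^\alpha F\|_{L^1} \leq (2\pi)^{2g}\|\partial^\alpha F\|_\infty$. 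Summing over the finitely many such $\alpha$ of degree $k$ bounds $|m|^k|f_m|$ by a constant $F_k$ depending only on $F$ and $k$, using that $\abs{m}^k = (\sum_j |a_j|+|b_j|)^k$ is comparable to $\sum |a_j^{\alpha}b_j^{\beta}|$ over $|\alpha+\beta| = k$. Third, for the "if'' direction: given the bounds \eqref{eq:25} for every $k$, the series $\sum_m f_m e^{i\langle m,x\rangle}$ converges absolutely and uniformly, as do all its formal term-by-term derivatives $\sum_m f_m (i m)^\alpha e^{i\langle m,x\rangle}$ (since $|m^\alpha f_m| \leq |m|^{|\alpha|}|f_m| \leq F_{|\alpha|+2g+1}|m|^{-(2g+1)}$, which is summable over $\setZ^{2g}$); by the standard theorem on differentiating series of functions, the sum is $C^\infty$ and equals the $L^2$-function $\sum f_m m$ almost everywhere, hence represents it. Fourth, note the equivalence of the two formulations in the statement: "$|f_m|$ decays faster than any polynomial in $|m|^{-1}$'' is by definition the assertion that $|m|^k|f_m|\to 0$ for every $k$, which is equivalent to the existence of the bounds $F_k$ (one direction is trivial; for the other, if $|m|^k|f_m|\leq F_k$ for all $k$ then $|m|^k|f_m| \leq |m|^{-1}F_{k+1}\to 0$). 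Finally, observe that the criterion is independent of the choice of basis used to define $\abs{\cdot}$, since any two such norms on $H_1(\Sigma)$ are equivalent and smoothness is intrinsic.

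The only mildly delicate point — and the one I would spell out carefully — is the combinatorial bookkeeping in the "only if'' direction: one must check that from the estimates on the individual mixed partials $|\widehat{\partial^\alpha F}(m)|$ one can actually recover a uniform bound on $\abs{m}^k\abs{f_m}$ with the $\ell^1$-type norm \eqref{eq:10}, rather than, say, only on $\max_j(|a_j|+|b_j|)^k\abs{f_m}$. This is routine — expand $(\sum_j |a_j|+|b_j|)^k$ by the multinomial theorem and bound each term $|a_1|^{k_1}\cdots|b_g|^{k_{2g}}$ by the absolute value of the corresponding Fourier coefficient of $\partial_{\theta_1}^{k_1}\cdots\partial_{\phi_g}^{k_{2g}} F$ — but it is the one place where a careless argument could fail, so it deserves explicit treatment. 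Everything else is a direct appeal to elementary Fourier analysis on the torus.
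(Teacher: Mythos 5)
Your argument is correct and is exactly the standard Fourier-analytic characterization of smooth functions on the torus $(\setR/2\pi\setZ)^{2g}$ that the paper has in mind; the paper in fact states Proposition~\ref{prop:2} without proof, treating it as this classical fact. Your write-up supplies the omitted details (integration by parts for necessity, absolute convergence of all term-by-term derivatives for sufficiency, and the multinomial bookkeeping relating the $\ell^1$-norm $\abs{m}$ to the mixed partials) correctly, so there is nothing to add.
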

These conditions are independent of the chosen basis for
$H_1(\Sigma)$.

\section{Cohomology computation}
\label{sec:cohom-comp}

In this final section, we will state and prove the main results of
this paper.

\subsection{Applying Hochschild-Serre}
\label{sec:apply-hochsch-serre}

From now on, we fix a symplectic basis $(x_1, y_1, \ldots, x_g, y_g)$
for $H_1(\Sigma)$, and using this basis we identify $\Sp(H_1(\Sigma))$
with $\Sp(2g, \setZ)$. Consider the short exact sequence
\begin{equation*}
  1 \to \mathcal{T} \to \Gamma \to \Sp(2g, \setZ) \to 1.
\end{equation*}
Since the Torelli group, by definition, acts trivially on
$H_1(\Sigma)$ and hence on $\ltwo(H')$, we are in a position to apply
the exact sequence~\eqref{eq:1}. This now takes the guise
\begin{equation}
  \label{eq:29}
  0\to H^1(\Sp(2g, \setZ), \ltwo(H')) \to H^1(\Gamma, \ltwo(H')) \to
  H^1(\mathcal{T}, \ltwo(H'))^\Gamma.
\end{equation}
\begin{lemma}
  \label{lem:3}
  The last map in \eqref{eq:29} is the zero map.
\end{lemma}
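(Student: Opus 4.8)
The plan is to show that any $\Gamma$-invariant homomorphism $v \colon \mathcal{T} \to \ltwo(H')$ must be the zero homomorphism. By Johnson's theorem, $\mathcal{T}$ is generated by genus-one bounding pair maps $\tau_\gamma \tau_\delta^{-1}$, so it suffices to show that $v(\tau_\gamma \tau_\delta^{-1}) = 0$ for every bounding pair $(\gamma,\delta)$. The key structural fact I would exploit is that the $\Gamma$-action on $\ltwo(H')$ is a \emph{permutation} action on the orthonormal basis $\{m \mid m \in H'\}$ (Lemma~\ref{lem:16}), so each basis vector $m$ has a stabilizer in $\Gamma$, and any $\Gamma$-invariant homomorphism is heavily constrained by the equivariance condition \eqref{eq:30}: $v(\phi k \phi^{-1}) = \phi\, v(k)$.

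First I would fix a bounding pair map $k = \tau_\gamma \tau_\delta^{-1}$ and write $v(k) = \sum_{m \in H'} c_m m$. Since $\gamma$ and $\delta$ are homologous non-separating curves, both $\tau_\gamma$ and $\tau_\delta$ act on $H_1(\Sigma)$ by the transvection along $[\vec\gamma] = [\vec\delta]$, via \eqref{eq:8}. The natural move is to find elements $\phi \in \Gamma$ that commute with $k$ but act nontrivially (in fact with infinite orbits, by Lemma~\ref{lem:11}) on the basis of $\ltwo(H')$; then the equivariance relation $v(\phi k \phi^{-1}) = v(k) = \phi\, v(k)$ forces $v(k)$ to be supported on the fixed set of $\phi$ acting on $H'$. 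The candidates for such $\phi$ are Dehn twists (or their powers) on curves disjoint from both $\gamma$ and $\delta$ — these commute with $\tau_\gamma$ and $\tau_\delta$ by Lemma~\ref{lem:5}, hence with $k$. Because $\gamma \cup \delta$ is a bounding pair of genus one on a surface of genus $\geq 3$, the complement has enough room (a subsurface of genus $\geq 1$ on the side away from the genus-one piece, plus the genus-one piece itself) that the curves disjoint from $\gamma \cup \delta$ carry, among their homology classes, a spanning set — or at least a large enough set — of $H_1(\Sigma)$. The upshot I am aiming for: the only homology class $m$ fixed by \emph{all} such twists is $m = 0$, which is excluded from $H'$. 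Combined with square-summability (so $v(k)$ can't be supported on an infinite orbit), this should force $v(k) = 0$.

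More carefully, for a curve $\eta$ disjoint from $\gamma \cup \delta$, $\tau_\eta$ commutes with $k$, so $\tau_\eta\, v(k) = v(\tau_\eta k \tau_\eta^{-1}) = v(k)$; since $\tau_\eta$ permutes the basis according to $m \mapsto m + i([\vec\eta], m)[\vec\eta]$, and any nonzero-displacement orbit is infinite (Lemma~\ref{lem:11}), the coefficient function $(c_m)$ must be constant on $\tau_\eta$-orbits, hence (being $\ltwo$) must vanish on every $m$ with $i([\vec\eta],m) \neq 0$. Thus $v(k)$ is supported on $\bigcap_\eta \{m : i([\vec\eta],m) = 0\}$, the intersection taken over all $\eta$ disjoint from $\gamma\cup\delta$. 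The geometric heart of the argument is to check that this intersection is just $\{0\}$: equivalently, that the homology classes of simple closed curves disjoint from a genus-one bounding pair span $H_1(\Sigma)$. This is where I'd do the bookkeeping with an explicit model — realize $\gamma,\delta$ as a standard genus-one bounding pair in a chosen symplectic basis, exhibit curves disjoint from them whose classes are $x_1,y_1,\dots$ minus at most one troublesome direction, and handle the remaining direction separately (for instance by also using the relation $v(\tau_\gamma\tau_\delta^{-1})$ directly together with $\phi = \tau_\gamma$ itself, which commutes with $k$ and displaces classes in the $[\vec\gamma]$-direction). I expect \textbf{this last geometric step — showing the relevant intersection of kernels of intersection pairings is trivial, i.e. that curves disjoint from the bounding pair span homology — to be the main obstacle}, since it requires a concrete picture of the bounding pair rather than a soft argument; everything else is formal manipulation of the cocycle/equivariance identities plus the permutation-action and square-summability observations.
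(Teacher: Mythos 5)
Your overall strategy is more ambitious than the paper's: you try to show that the \emph{target} $H^1(\mathcal{T},\ltwo(H'))^\Gamma$ consists only of the zero homomorphism, whereas the lemma only requires that the \emph{image} of the restriction map vanishes. Unfortunately, the geometric step you yourself flag as the main obstacle is not just hard -- it fails. Any simple closed curve $\eta$ disjoint from $\gamma\cup\delta$ is in particular disjoint from $\gamma$, so $i([\vec\eta],[\vec\gamma])=0$; hence the classes of all such curves lie in the rank-$(2g-1)$ subgroup $[\vec\gamma]^\perp$ and cannot span $H_1(\Sigma)$. The intersection of the kernels $\{m: i([\vec\eta],m)=0\}$ therefore contains $([\vec\gamma]^\perp)^\perp=\setZ[\vec\gamma]$, so your argument only shows that $v(k)$ is supported on the nonzero multiples of $[\vec\gamma]$. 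Your proposed rescue, taking $\phi=\tau_\gamma$, does not help: by \eqref{eq:8}, $\tau_\gamma(n[\vec\gamma])=n[\vec\gamma]$, so $\tau_\gamma$ displaces nothing in that residual support. Worse, any $\phi$ commuting with $k=\tau_\gamma\tau_\delta\inv$ essentially preserves the isotopy class of $\gamma\cup\delta$ and hence fixes $[\vec\gamma]$ up to sign, so no choice of commuting element can kill coefficients on $\setZ[\vec\gamma]\setminus\{0\}$. The purely soft centralizer-plus-square-summability argument therefore cannot be completed, and it is not even clear that the stronger statement you are aiming for is true.

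The missing idea is to use that the homomorphism on $\mathcal{T}$ is the restriction of a cocycle $u$ defined on \emph{all} of $\Gamma$. Your first observation (that $\tau_\gamma$ commutes with $t=\tau_\gamma\tau_\delta\inv$, giving $u(t)=\tau_\gamma u(t)$, i.e.\ $u(t)=p_\gamma u(t)$) is exactly the paper's first step. But instead of hunting for more commuting elements, the paper then expands $u(t)=u(\tau_\gamma)-\tau_\gamma\tau_\delta\inv u(\tau_\delta)$ via the cocycle condition on $\Gamma$, notes that $p_\gamma=p_\delta$ on $\ltwo(H')$ because $\tau_\gamma$ and $\tau_\delta$ act identically on homology, and concludes $u(t)=p_\gamma u(\tau_\gamma)-\tau_\gamma\tau_\delta\inv p_\delta u(\tau_\delta)=0$ by Theorem~\ref{thm:4}. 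That theorem ($p_\alpha u(\tau_\alpha)=0$ for every simple closed curve) is the essential input your proposal never invokes, and it is precisely what handles the $\setZ[\vec\gamma]$-direction that your support analysis cannot reach.
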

\begin{proof}
  We must prove that any cocycle $u\colon \Gamma\to \ltwo(H')$
  restricts to zero on the Torelli group. To this end, we use the fact
  that the Torelli group is generated by genus~$1$ bounding pair
  maps. Let $t = \tau_\gamma \tau_\delta\inv$ be such a generator for
  $\mathcal{T}$. Since $t$ is invariant under conjugation by
  $\tau_\gamma$, the equivariance \eqref{eq:30} of $u$ restricted to
  $\mathcal{T}$ implies that
  \begin{equation*}
    u(t) = u(\tau_\gamma t\tau_\gamma\inv) = \tau_\gamma u(t)
  \end{equation*}
  which in turn implies that $u(t) = p_\gamma u(t)$. Now, using the
  fact that $\tau_\gamma$ and $\tau_\delta$ acts identically on
  $H_1(\Sigma)$, we know that $p_\gamma = p_\delta$ on
  $\ltwo(H')$. Hence using the fact that $u$ is in fact defined on all
  of $\Gamma$, we obtain
  \begin{equation*}
    u(t) = p_\gamma u(t) = p_\gamma (u(\tau_\gamma) - \tau_\gamma
    \tau_\delta\inv u(\tau_\delta)) = p_\gamma u(\tau_\gamma) -
    \tau_\gamma\tau_\delta\inv p_\delta u(\tau_\delta) = 0
  \end{equation*}
  by Theorem~\ref{thm:4}.
\end{proof}
\begin{corollary}
  \label{cl:2}
  The map
  \begin{equation*}
    H^1(\Sp(2g, \setZ), \ltwo(H')) \to H^1(\Gamma, \ltwo(H'))
  \end{equation*}
  is an isomorphism. \qed
\end{corollary}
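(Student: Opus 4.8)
The plan is to read off the corollary directly from the exact sequence \eqref{eq:29} together with Lemma~\ref{lem:3}, so no genuinely new work is needed; all the substance has already been spent on Theorem~\ref{thm:4} and on Lemma~\ref{lem:3}. Concretely, I would recall that Proposition~\ref{prop:1}, applied to the short exact sequence $1\to\mathcal{T}\to\Gamma\to\Sp(2g,\setZ)\to1$ with $M=\ltwo(H')$ (on which $\mathcal{T}$ acts trivially), yields the exact sequence
\begin{equation*}
  0\to H^1(\Sp(2g,\setZ),\ltwo(H')) \xrightarrow{\ \iota\ } H^1(\Gamma,\ltwo(H')) \xrightarrow{\ \rho\ } H^1(\mathcal{T},\ltwo(H'))^\Gamma,
\end{equation*}
where $\iota$ is induced by inflation along $\Gamma\to\Sp(2g,\setZ)$ and $\rho$ is induced by restriction to $\mathcal{T}$.

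Next I would extract the two consequences of exactness that I need. Exactness at the left term means precisely that $\iota$ is injective. Exactness at the middle term means that the image of $\iota$ equals the kernel of $\rho$. By Lemma~\ref{lem:3}, the map $\rho$ is identically zero, so $\ker\rho = H^1(\Gamma,\ltwo(H'))$; combining this with exactness at the middle gives that $\iota$ is surjective. An injective and surjective homomorphism is an isomorphism, which is the assertion of Corollary~\ref{cl:2}.

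There is no real obstacle left at this stage: the only inputs are the formal exactness statement of Proposition~\ref{prop:1} and the vanishing of the restriction map established in Lemma~\ref{lem:3}, the latter resting in turn on Theorem~\ref{thm:4} (the identity $p_\gamma u(\tau_\gamma)=0$) and Johnson's generation of $\mathcal{T}$ by genus~$1$ bounding pair maps. If anything deserves a word of care, it is merely the bookkeeping of which exactness statement gives injectivity and which gives surjectivity, and the observation — already recorded before \eqref{eq:29} — that $\mathcal{T}$ does act trivially on $\ltwo(H')$ so that Proposition~\ref{prop:1} indeed applies. Hence the proof is a one-line deduction, and I would present it as such.
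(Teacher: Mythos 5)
Your proposal is correct and is exactly the argument the paper intends (the corollary is stated with a \qed precisely because it follows immediately from the exactness of \eqref{eq:29} together with Lemma~\ref{lem:3}). Injectivity from exactness at the left term and surjectivity from the vanishing of the restriction map is the whole content, as you say.
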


Now the first main theorem.
\begin{theorem}
  \label{thm:1}
  The cohomology group
  \begin{equation*}
    H^1(\Gamma, L^2_0(M))
  \end{equation*}
  vanishes.
\end{theorem}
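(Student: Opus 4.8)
The plan is to reduce everything to Property~(T) of $\Sp(2g,\setZ)$ via the work already done in the excerpt. By Lemma~\ref{lem:16} we have a mapping class group equivariant isomorphism $L^2_0(M)\cong\ltwo(H')$, so it suffices to show $H^1(\Gamma,\ltwo(H'))=0$. The action of $\Gamma$ on $\ltwo(H')$ is unitary: it is the permutation representation coming from the $\Gamma$-action on the discrete set $H'=H_1(\Sigma)\setminus\{0\}$, and permuting an orthonormal basis is manifestly an isometry of the Hilbert space. Since the Torelli group $\mathcal{T}$ acts trivially on $H_1(\Sigma)$ and hence on $\ltwo(H')$, we are exactly in the situation of Proposition~\ref{prop:1} applied to the short exact sequence $1\to\mathcal{T}\to\Gamma\to\Sp(2g,\setZ)\to1$, giving the exact sequence~\eqref{eq:29}.

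Next I would invoke the two facts that have just been established. First, by Lemma~\ref{lem:3} the rightmost map in~\eqref{eq:29} is zero, so Corollary~\ref{cl:2} tells us that the inflation map
\begin{equation*}
  H^1(\Sp(2g,\setZ),\ltwo(H'))\to H^1(\Gamma,\ltwo(H'))
\end{equation*}
is an isomorphism. Second, $\ltwo(H')$ is a unitary representation of $\Sp(2g,\setZ)$ (the permutation action on $H'$ factors through $\Gamma\to\Sp(2g,\setZ)$ since $\mathcal{T}$ acts trivially), so Corollary~\ref{cl:3} — which is Property~(T) for $\Sp(2g,\setZ)$ in the guise of Lemma~\ref{lem:15}(v), valid since $g\geq3\geq2$ — gives $H^1(\Sp(2g,\setZ),\ltwo(H'))=0$. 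Combining the isomorphism with this vanishing yields $H^1(\Gamma,\ltwo(H'))=0$, and transporting back along the equivariant isomorphism of Lemma~\ref{lem:16} gives $H^1(\Gamma,L^2_0(M))=0$, as claimed.

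There is really no serious obstacle left at this stage: all the substantive work has been front-loaded into Theorem~\ref{thm:4} (which powers Lemma~\ref{lem:3}) and into the Property~(T) input for $\Sp(2g,\setZ)$. The only points worth stating carefully are that the representation $\ltwo(H')$ is genuinely unitary so that Corollary~\ref{cl:3} applies, and that $\ltwo(H')$ really is a module over the quotient $\Sp(2g,\setZ)$ rather than merely over $\Gamma$ — both of which follow immediately from $\mathcal{T}$ acting trivially. So the proof is a short assembly: equivariant identification of the coefficient module, the Hochschild–Serre sequence~\eqref{eq:29}, vanishing of the restriction map, and Property~(T).
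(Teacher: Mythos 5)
Your proposal is correct and follows essentially the same route as the paper: identify $L^2_0(M)$ with $\ltwo(H')$ via Lemma~\ref{lem:16}, use Corollary~\ref{cl:2} (from the Hochschild--Serre sequence and Lemma~\ref{lem:3}) to reduce to $\Sp(2g,\setZ)$, and conclude by Property~(T) via Corollary~\ref{cl:3}. The extra checks you flag (unitarity of the permutation representation and the fact that it descends to the quotient) are worth stating and are indeed immediate from the triviality of the Torelli action.
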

\begin{proof}
  By Corollary~\ref{cl:3}, the cohomology group $H^1(\Sp(2g, \setZ),
  \ltwo(H'))$ vanishes, and by Corollary \ref{cl:2} the same is true
  for $H^1(\Gamma, \ltwo(H'))$. Finally, $\ltwo(H')$ and
  $L^2_0(M)$ are isomorphic as $\Gamma$-modules by
  Lemma~\ref{lem:16}.
\end{proof}

\subsection{Smooth coefficients}
\label{sec:smooth-coefficients}

The second main result looks similar to the first, and its proof is
also based on it.
\begin{theorem}
  \label{thm:2}
  The cohomology group
  \begin{equation*}
    H^1(\Gamma, C^\infty_0(M))
  \end{equation*}
  vanishes.
\end{theorem}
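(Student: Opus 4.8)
The plan is to leverage Theorem~\ref{thm:1} together with the explicit description of the $\Gamma$-action on the Fourier basis from Section~\ref{sec:funct-abel-moduli}. Let $u\colon\Gamma\to C^\infty_0(M)$ be a cocycle. Since the $\Gamma$-action on $M$ is smooth, $C^\infty_0(M)$ is a $\Gamma$-submodule of $L^2_0(M)$, so composing $u$ with the inclusion $C^\infty_0(M)\hookrightarrow L^2_0(M)$ produces a cocycle with values in $L^2_0(M)$. Theorem~\ref{thm:1} then furnishes an element $v\in L^2_0(M)$ with $u(\phi)=(1-\phi)v$ for every $\phi\in\Gamma$. The whole point is to show that \emph{this} $v$ is already smooth; once $v\in C^\infty_0(M)$, the identity $u=(1-\cdot)v$ exhibits $u$ as a coboundary in $C^\infty_0(M)$, so $[u]=0$ and the theorem follows.

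To see that $v$ is smooth, I would expand it in the orthonormal basis: $v=\sum_{m\in H'}v_m\widetilde m$. Fix a simple closed curve $\gamma$ and a homology class $m_0$ on which $\tau_\gamma$ acts non-trivially; by Lemma~\ref{lem:11} the orbit $m_k:=\tau_\gamma^k m_0$, $k\in\setZ$, consists of pairwise distinct nonzero elements of $H'$. Reading the identity $(1-\tau_\gamma)v=u(\tau_\gamma)$ coefficient-wise along this orbit — recall from Lemma~\ref{lem:2} that $\tau_\gamma$ merely permutes the basis — gives $v_{m_k}-v_{m_{k-1}}=c_{m_k}$, where $c_m$ denotes the $m$-th Fourier coefficient of the smooth function $u(\tau_\gamma)$. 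Because $v\in\ltwo(H')$ and the $m_k$ are distinct, $v_{m_k}\to0$ as $k\to\pm\infty$; summing the telescoping relation in either direction therefore yields the closed formulas $v_{m_0}=-\sum_{k\ge1}c_{\tau_\gamma^k m_0}$ and $v_{m_0}=\sum_{k\le0}c_{\tau_\gamma^k m_0}$. It is essential here that the "constant of integration" along the orbit is pinned down by the $\ltwo$-decay of $v$, not by any regularity of $v$, which is precisely what we are still trying to establish.

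Next I would invoke Lemma~\ref{lem:12} together with the remark following it: there is a \emph{finite} collection of curves $\gamma^{(1)},\dots,\gamma^{(N)}$ such that every nonzero $m_0$ lies on an orbit $\{\tau_{\gamma^{(i)}}^{\pm k}m_0\}_{k\ge0}$ along which the norm $\abs{\cdot}$ of~\eqref{eq:10} is strictly increasing, for a suitable index $i$ and sign. Since $\abs{\cdot}$ is integer-valued, strict monotonicity upgrades to $\abs{\tau_{\gamma^{(i)}}^{\pm k}m_0}\ge\abs{m_0}+k$. Each $u(\tau_{\gamma^{(i)}})$ is smooth, so by Proposition~\ref{prop:2} its coefficients satisfy $\abs{m}^{j}\abs{c^{(i)}_m}\le F_j^{(i)}$ for all $j$; put $F_j=\max_i F_j^{(i)}$, a constant depending only on $u$. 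Plugging the growth estimate into the closed formula for $v_{m_0}$ and comparing with a telescoping series,
\begin{equation*}
  \abs{v_{m_0}}\le\sum_{k\ge1}F_{j+2}\,(\abs{m_0}+k)^{-(j+2)}\le\frac{F_{j+2}}{\abs{m_0}^{j}}\sum_{k\ge1}\frac1{(\abs{m_0}+k-1)(\abs{m_0}+k)}=\frac{F_{j+2}}{\abs{m_0}^{j+1}},
\end{equation*}
so $\abs{m_0}^{j}\abs{v_{m_0}}\le F_{j+2}$ for every $j$ and every $m_0\in H'$ (in the "decreasing'' case one picks up an extra term $c_{m_0}$ and the same kind of bound results). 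By Proposition~\ref{prop:2} this shows $v\in C^\infty_0(M)$, and we are done.

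I expect the main obstacle to be exactly the bookkeeping in the last two steps: the curve that forces the orbit to grow depends on $m_0$, so one must genuinely use the finiteness asserted after Lemma~\ref{lem:12} in order to obtain constants $F_j$ that are uniform in $m_0$; and one must take care, when solving for $v_{m_0}$ along each orbit, to use only the $\ltwo$-decay of $v$ rather than a smoothness assumption. Everything else is the soft input of Theorem~\ref{thm:1}, Proposition~\ref{prop:2}, and Lemmas~\ref{lem:11}--\ref{lem:12}.
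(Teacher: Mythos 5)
Your proposal is correct and follows essentially the same route as the paper's own proof: reduce to Theorem~\ref{thm:1} via the inclusion $C^\infty_0(M)\hookrightarrow L^2_0(M)$, then show the cobounding element is smooth by telescoping the coefficient relation $v_{m_k}-v_{m_{k-1}}=c_{m_k}$ along a Dehn-twist orbit chosen via Lemma~\ref{lem:12} (and the finiteness remark after it), using the $\ell^2$-decay of $v$ to fix the constant of integration and Proposition~\ref{prop:2} to bound the tail. The only differences are cosmetic (your comparison with the telescoping series $\tfrac{1}{(n-1)n}$ in place of the paper's integral comparison, and your explicit attention to the extra $c_{m_0}$ term in the decreasing case).
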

\begin{proof}
  Let $u\colon \Gamma\to C^\infty_0(M)$ by a cocycle.
  Composing with the inclusion $C^\infty_0(M)\to
  L^\infty_0(M) \cong \ltwo(H')$ we may think of $u$ as a
  cocycle $\Gamma\to \ltwo(H')$. Hence, by Theorem~\ref{thm:1} there
  exists an element $f = \sum_{m\in H'} f_m m$ in $\ltwo(H')$ such
  that $u(\gamma) = f-\gamma f$ for each $\gamma\in \Gamma$. We claim
  that $f$ is in fact a smooth function.

  To see this, we must verify the condition \eqref{eq:25} from
  Proposition~\ref{prop:2}. It is clearly enough to do this for all
  large enough $k$, so assume $k\geq 2$. We must find a constant $F_k$
  such that $\abs{m}^k \abs{f_m} \leq F_k$ for all $m\in H'$. Consider
  the $2g$ Dehn twists $\tau_1, \tau_2, \ldots, \tau_{2g}$ in the simple
  closed curves representing our fixed basis for $H_1(\Sigma)$. By
  assumption, for each $j=1,\ldots,2g$, the element
  \begin{equation*}
    u(\tau_j^{\pm1}) = f - \tau_j^{\pm1} f = \sum_{m\in H'} (f_m - f_{\tau_j^{\mp1} m})m   
  \end{equation*}
  defines a smooth function. Putting $g_{m,j}^{\pm} = f_m -
  f_{\tau_j^{\mp1}m}$, there is a constant $G_{k+1}$ such that
  \begin{equation*}
    \abs{m}^{k+1} \abs{g_{m,j}^{\pm}} \leq G_{k+1}
  \end{equation*}
  for all $m\in H'$ and all $j=1,2,\ldots,2g$ (such a constant exist
  for each $\tau_j^{\pm1}$; we may choose the largest of these $4g$
  numbers). We claim that $F_k = G_{k+1}/k$ suffices. To see this,
  observe that $\abs{f_m}\to 0$ as $\abs{m}\to\infty$ since the
  collection $(f_m)$ is square summable. Now, let $m\in H'$ be any
  given element. Choose, by Lemma~\ref{lem:12}, a
  $j\in\{1,2,\ldots,2g\}$ and $\epsilon = \pm 1$ such that
  $\abs{\tau_j^{\epsilon n} m}$ is strictly increasing.  Assume WLOG
  that $\epsilon = +1$. For each $R\geq1$, we have the telescoping sum
  \begin{equation*}
    f_{\tau_j^R m} - f_m = g_{\tau_j^R m, j}^+ + g_{\tau_j^{R-1}m, j}^+ +
    \cdots + g_{\tau_j m, j}^+ = \sum_{r=1}^{R} g_{\tau_j^r m, j}^+
  \end{equation*}
  and hence, since $f_{\tau_j^R m} \to 0$ for $R\to\infty$, we obtain
  \begin{multline*}
    \abs{f_m} = \abs{ \sum_{r=1}^{\infty} g_{\tau_j^r m, j}^+ }
    \leq \sum_{r=1}^{\infty} \abs{g_{\tau_j^r m, j}^+} 
    \leq G_{k+1} \sum_{r=1}^\infty \frac{1}{\abs{\tau_j^r m}^{k+1}}\\
    \leq G_{k+1} \sum_{r=\abs{m}+1}^{\infty} \frac{1}{r^{k+1}}
    < G_{k+1} \int_{\abs{m}}^\infty \frac{1}{r^{k+1}} dr
    = \frac{G_{k+1}}{k \abs{m}^k} 
  \end{multline*}
  using the fact that $\abs{\tau_j^r m}$ is a strictly increasing
  sequence of integers and elementary estimates.

  In case $\epsilon = -1$, we instead use the identity
  \begin{equation*}
    f_m = \sum_{r=1}^{\infty} g_{\tau_j^{-r}m,j}^-
  \end{equation*}
  and proceed exactly as above.
\end{proof}

\bibliographystyle{alphaurl}
\bibliography{../phd}

\begin{thebibliography}{BdlHV08}

\bibitem[And05]{MR2135450}
J{\o}rgen~Ellegaard Andersen.
\newblock Deformation quantization and geometric quantization of abelian moduli
  spaces.
\newblock {\em Comm. Math. Phys.}, 255(3):727--745, 2005.

\bibitem[And06]{0611126}
J{\o}rgen~Ellegaard Andersen.
\newblock Hitchin's connection, {T}oeplitz operators and symmetry invariant
  deformation quantization.
\newblock 2006.
\newblock \href {http://arxiv.org/abs/math.DG/0611126}
  {\path{arXiv:math.DG/0611126}}.

\bibitem[And07]{math.QA/0706.2184}
J{\o}rgen~Ellegaard Andersen.
\newblock Mapping class groups do not have {K}azhdan's property {(T)}.
\newblock 2007.
\newblock \href {http://arxiv.org/abs/math.QA/0706.2184}
  {\path{arXiv:math.QA/0706.2184}}.

\bibitem[AV07]{0710.2203}
J{\o}rgen~Ellegaard Andersen and Rasmus Villemoes.
\newblock Degree one cohomology with twisted coefficients of the mapping class
  group.
\newblock 2007.
\newblock \href {http://arxiv.org/abs/0710.2203} {\path{arXiv:0710.2203}}.

\bibitem[AV08]{0802.4372}
J{\o}rgen~Ellegaard Andersen and Rasmus Villemoes.
\newblock The first cohomology of the mapping class group with coefficients in
  algebraic functions on the $\mathrm{SL}_2(\mathbb{C})$ moduli space.
\newblock 2008.
\newblock \href {http://arxiv.org/abs/0802.4372} {\path{arXiv:0802.4372}}.

\bibitem[BdlHV08]{MR2415834}
Bachir Bekka, Pierre de~la Harpe, and Alain Valette.
\newblock {\em Kazhdan's property ({$T$})}, volume~11 of {\em New Mathematical
  Monographs}.
\newblock Cambridge University Press, Cambridge, 2008.

\bibitem[Gel08]{MR2448015}
Tsachik Gelander.
\newblock On deformations of {$F\sb n$} in compact {L}ie groups.
\newblock {\em Israel J. Math.}, 167:15--26, 2008.

\bibitem[Gol84]{MR762512}
William~M. Goldman.
\newblock The symplectic nature of fundamental groups of surfaces.
\newblock {\em Adv. in Math.}, 54(2):200--225, 1984.

\bibitem[Gol97]{MR1491446}
William~M. Goldman.
\newblock Ergodic theory on moduli spaces.
\newblock {\em Ann. of Math. (2)}, 146(3):475--507, 1997.

\bibitem[Gol07]{MR2346275}
William~M. Goldman.
\newblock An ergodic action of the outer automorphism group of a free group.
\newblock {\em Geom. Funct. Anal.}, 17(3):793--805, 2007.

\bibitem[Joh79]{MR529227}
Dennis~L. Johnson.
\newblock Homeomorphisms of a surface which act trivially on homology.
\newblock {\em Proc. Amer. Math. Soc.}, 75(1):119--125, 1979.

\bibitem[PX02]{MR1915045}
Doug Pickrell and Eugene~Z. Xia.
\newblock Ergodicity of mapping class group actions on representation
  varieties. {I}. {C}losed surfaces.
\newblock {\em Comment. Math. Helv.}, 77(2):339--362, 2002.

\bibitem[PX03]{MR2015257}
Doug Pickrell and Eugene~Z. Xia.
\newblock Ergodicity of mapping class group actions on representation
  varieties. {II}. {S}urfaces with boundary.
\newblock {\em Transform. Groups}, 8(4):397--402, 2003.

\end{thebibliography}

\end{document}